\newtheorem{theorem}{Theorem}[section]
\newtheorem*{theorem*}{Theorem}
\newtheorem{lemma}{Lemma}[section]
\newtheorem{proposition}{Proposition}[section]
\newtheorem{conjecture}{Conjecture}[section]
\theoremstyle{remark}
\newcommand{\CC}{\mathds{C}}
\newcommand{\RR}{\mathds{R}}
\newcommand{\NN}{\mathds{N}}
\newcommand{\ind}{\mathds{1}}
\newcommand{\norma}{\bigg{\|}}
\newcommand{\EE}{\mathbb{E}}
\newcommand{\MM}{\mathcal{M}}
\DeclareMathOperator{\lcm}{lcm}
\DeclareMathOperator{\rad}{rad}
\begin{document}
\title{The Erd\H{o}s discrepancy problem over the squarefree and cubefree integers}
\author{Marco Aymone}
\dedicatory{In memory of Vladas Sidoravicius}
\begin{abstract}
 Let $g:\mathbb{N}\to\{-1,1\}$ be a completely multiplicative function, $\mu$ be the M\"obius function and $\mu_2^2(n)$ be the indicator that $n$ is cubefree. We prove that $f=\mu^2g$ and $f=\mu_2^2g$ have unbounded partial sums. Our proofs are built upon Klurman and Mangerel's proof of Chudakov's conjecture, Klurman's work on correlations of multiplicative functions and Tao's resolution of the Erd\H{o}s discrepancy problem.
\end{abstract}

\maketitle

\section{Introduction.}
The Erd\H{o}s discrepancy problem asks if there exists an arithmetic function $f:\NN\to\{-1,1\}$ that it is well distributed along all homogeneous arithmetic progressions, in the sense that
\begin{equation*}
\sup_{x,d}\bigg{|}\sum_{n\leq x}f(nd)\bigg{|}<\infty.
\end{equation*}

This problem became the subject of the Polymath5 project in 2010, and as Tao showed in 2015 \cite{taodiscrepancy}, for such $f$ the $\sup$ of the quantity above is equal to infinity. In particular, if $f:\NN\to\{-1,1\}$ is completely multiplicative, then $f$ has unbounded partial sums. However, if $f:\NN\to\{-1,1\}$ is assumed to be only multiplicative, then $f$ may have bounded partial sums, for instance $f(n)=(-1)^{n+1}$.

Following Granville and Soundararajan \cite{granvillepretentious}, define the ``distance'' up to $x$ between two multiplicative functions $|f|,|g|\leq 1$ as
\begin{equation*}
\mathbb{D}(f,g;x):=\left(\sum_{p\leq x}\frac{1-Re(f(p)\overline{g(p))}}{p}\right)^{1/2},
\end{equation*}
where in the sum above $p$ denotes a generic prime. Moreover, let $\mathbb{D}(f,g):=\mathbb{D}(f,g;\infty)$, and we say that $f$ pretends to be $g$, or that $f$ is $g$-pretentious if $\mathbb{D}(f,g)<\infty$.

In \cite{taodiscrepancy}, Tao showed that if $f:\NN\to\{-1,1\}$ is multiplicative and has bounded partial sums, then $f$ is $1$-pretentious and $f(2^k)=-1$ for all $k\geq 1$. In \cite{klurmancorrelation}, Klurman provided a complete classification of such $f$ with bounded partial sums by proving the Erd\H{o}s-Coons-Tao conjecture:  A multiplicative function $f:\NN\to\{-1,1\}$ has bounded partial sums if and only if  there exists an integer $m\geq 1$ such that for all $n\in\NN$, $f(n+m)=f(n)$ and $\sum_{n=1}^mf(n)=0$.

When we allow zero values, that is, for a completely multiplicative function $f:\NN\to\{-1,0,1\}$ such that $f(p)=0$ only for a finite subset of primes $p$, then also $f$ may have bounded partial sums, for instance, any real and non-principal Dirichlet character $\chi$. Moreover, in \cite{klurmanchudakov}, Klurman and Mangerel established Chudakov's conjecture: Assume that $f:\NN\to\CC$ is completely multiplicative, takes only a finite number of values, has bounded partial sums and the cardinality $|\{p\mbox{ is a prime}:f(p)=0\}|<\infty$. Then $f$ is a Dirichlet character.

Here we are interested in the discrepancy problem for multiplicative functions $f$ assuming zero values and such that at primes $f(p)=\pm 1$. More precisely, we are interested in knowing if such $f$ has bounded, or unbounded partial sums when the summation $\sum_{n\leq x}' f(n)$ is restricted to certain subsets of integers with additional arithmetic properties. As shown in \cite{klurmancorrelation}, \cite{klurmanchudakov} and \cite{taodiscrepancy} (see Proposition \ref{proposicao chi pretentious} below), if such $f$ has bounded partial sums, then $f$ must be $\chi$-pretentious for some real and primitive Dirichlet character $\chi$.

In \cite{aymoneresemblingmobius} the author addressed the question of how small we can make the partial sums of a multiplicative function supported on the squarefree integers, that is, $f=\mu^2g$, where $g:\NN\to\{-1,1\}$ is a multiplicative function and $\mu$ is the M\"obius function. As mentioned above, when we look at multiplicative functions with small partial sums, then one needs to look first at multiplicative functions that pretend to be a Dirichlet character $\chi$. It has been proved that if $f$ is \textit{strongly} $\chi$-pretentious for some real and non-principal Dirichlet character $\chi$, in the sense that $\sum_{p\leq x}|1-f(p)\chi(p)|\ll 1$, then the partial sums of $f$ cannot have too much cancellation, more precisely $\sum_{n\leq x}f(n)$ is $\Omega(x^{1/4-\epsilon})$. This motivates us to conjecture:
\begin{conjecture}\label{conjecture squarefree} If $g:\NN\to\{-1,1\}$ is multiplicative, then $f=\mu^2g$ is such that $\sum_{n\leq x}f(n)=\Omega(x^{1/4-\epsilon})$.
\end{conjecture}
Towards this conjecture, our first result states:
\begin{theorem}\label{teorema correlacoes resembling mobius} Let $f=\mu^2g$ where $g:\NN\to\{-1,1\}$ is multiplicative. Then $f$ has unbounded partial sums.
\end{theorem}
 
 Some partial results in the direction of Theorem \ref{conjecture squarefree} were previously obtained by the author in a preprint version of this paper, and later by Klurman et al. \cite{klurmanteravainen} in their preprint (arXiv, version 1). In a newer version of \cite{klurmanteravainen} (to appear in Trans. of the Amer. Math. Soc.), Theorem \ref{teorema correlacoes resembling mobius} has been settled by using \textit{the rotation trick}, a new method discovered by the authors of \cite{klurmanteravainen}. Here we present a different proof inspired by the proof of Chudakov's conjecture \cite{klurmanchudakov}. As mentioned above, if our $f$ has bounded partial sums, then $f$ is $\chi$-pretentious for some real and primitive Dirichlet character $\chi$ of conductor $q$, and when this happens it has been established \cite{klurmancorrelation} that (see Theorem \ref{teorema correlacoes klurman} below):
\begin{equation*}
S_d:=\lim_{x\to\infty}\frac{1}{x}\sum_{n\leq x}f(n)\overline{f(n+d)}:=v(d;f\chi)\lambda(d;|f|,\chi).
\end{equation*}
The function $\lambda(d; |f|, \chi)$ depends essentially on the sum $\sum_{a\leq q }\chi(a)\overline{\chi(a+d)}$, and the function $v(d)$ can be expressed as a constant $C=C(f)$ times $1\ast u(d)$, where $u$ is a well behaved multiplicative function. With these correlations formulae, we can give precise estimates for
\begin{equation}\label{equacao definicao Lambda introducao}
\Lambda(H):=\lim_{x\to\infty}\frac{1}{x}\sum_{n\leq x}\bigg{|}\sum_{k=n+1}^{n+H} f(k) \bigg{|}^2=\sum_{|h|\leq H}(H-|h|)S_{|h|}.
\end{equation}
If $f$ has bounded partial sums, then $\Lambda(H)=O(1)$, and as we show below, the structure of $u$ implies that this is impossible in the squarefree case. 

The next natural question is to ask if a result analogous to Theorem \ref{teorema correlacoes resembling mobius} holds in the $k$-free integers. In the cubefree case we have:

\begin{theorem}\label{teorema cube free} Let $g:\NN\to\{-1,1\}$ be completely multiplicative and let $f=\mu_2^2g$, where $\mu_2^2(n)$ is the indicator that $n$ is \textit{cubefree}. Then $f$ has unbounded partial sums.
\end{theorem}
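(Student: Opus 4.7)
The plan is to argue by contradiction: assume $S(x) := \sum_{n \leq x} f(n) = O(1)$. By the proposition referenced in the introduction (combining the pretentious theory from \cite{taodiscrepancy}, \cite{klurmancorrelation}, and \cite{klurmanchudakov}), $g$ is forced to be $\chi$-pretentious for some real primitive Dirichlet character $\chi$ of some conductor $q$.

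First I would exploit the convolution identity $\mu_2^2(n) = \sum_{d^3 \mid n} \mu(d)$ together with the complete multiplicativity of $g$ to write
\begin{equation*}
S(x) = \sum_{d \leq x^{1/3}} \mu(d)\, g(d)\, G(x/d^3), \qquad G(y) := \sum_{n \leq y} g(n),
\end{equation*}
and M\"obius-invert (using $g(d)^2 = 1$) to get $G(x) = \sum_{d \leq x^{1/3}} g(d)\, S(x/d^3)$, so the standing hypothesis yields $G(x) = O(x^{1/3})$. If $\chi$ is the principal character, then $g$ is $1$-pretentious and Wirsing's mean value theorem immediately gives $G(x) \sim c\,x$ with $c = \prod_p (1 - 1/p)/(1 - g(p)/p) > 0$, contradicting $G(x) = O(x^{1/3})$.

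When $\chi$ is non-principal I would introduce $h := g\chi$, a completely multiplicative function taking values in $\{-1, 0, 1\}$ (vanishing precisely at primes dividing $q$) and satisfying $\sum_p (1 - h(p))/p < \infty$, so that $h$ is $1$-pretentious. The twist $\tilde f := \mu_2^2 h = f \chi$ is multiplicative and supported on cubefree integers, and Wirsing's theorem applied to $\tilde f$ gives
\begin{equation*}
\sum_{n \leq x} f(n)\chi(n) = (m + o(1))\,x, \qquad m := \prod_p \left(1 - \tfrac{1}{p}\right)\!\left(1 + \tfrac{h(p)}{p} + \tfrac{h(p)^2}{p^2}\right) > 0,
\end{equation*}
each local factor being positive and the product converging absolutely. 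Equivalently, the twisted Dirichlet series $F_\chi(s) := \sum_n f(n)\chi(n)/n^s = L(s,h)/L(3s,h)$ has a simple pole at $s = 1$ with positive residue.

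The final contradiction would be obtained by playing this pole against the cubefree Dirichlet-series identity $F(s) := \sum_n f(n)/n^s = L(s,g)/L(3s,g)$. The hypothesis $S(x) = O(1)$ makes $F$ analytic on $\Re(s) > 0$, while $G(x) = O(x^{1/3})$ gives $L(s,g)$ analytic on $\Re(s) > 1/3$; iterating the relation $L(s,g) = F(s) L(3s,g)$ yields an analytic continuation of $L(s,g)$ to all of $\Re(s) > 0$ together with the infinite-product representation $L(s,g) = \prod_{n \geq 0} F(3^n s)$. Resolving $F$ through Dirichlet characters modulo $q$ and using the pole of $F_\chi$, orthogonality yields $\sum_{\chi(n) = \pm 1,\; n \leq x} f(n) \sim \pm (m/2)\, x$, forcing an exact linear-order cancellation inside $S(x) = O(1)$ that is incompatible with the self-similar zero propagation $\rho \mapsto \rho/3 \mapsto \rho/9 \mapsto \cdots$ which the analyticity of $F$ on $\Re(s) > 0$ imposes on $L(s,g)$. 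The main obstacle I anticipate is cleanly closing this last step: converting the linear growth of the character-twisted sum into a rigorous contradiction with $S(x) = O(1)$. My expectation is that the cubefree rigidity $f(p^2) = 1$ at \emph{every} prime $p$ (which is what produces the clean factorization $F(s) = L(s,g)/L(3s,g)$ with no ambiguity at small primes) is precisely the feature that rules out the exceptional $\chi$ tolerated in the squarefree analogue (Theorem~\ref{teorema correlacoes resembling mobius}).
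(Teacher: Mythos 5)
Your preliminary steps are all correct: the convolution identity $\mu_2^2 = \mu\ast_{\text{cube}}1$, the derivation $G(x) = \sum_{d\le x^{1/3}}g(d)S(x/d^3) = O(x^{1/3})$, the factorization $F(s) = L(s,g)/L(3s,g)$, the infinite-product continuation of $L(s,g)$ to $\Re(s)>0$, the Wirsing argument disposing of principal $\chi$, and the observation that $\sum_{n\le x}f(n)\chi(n)\sim mx$ with $m>0$ (each local factor $(1-1/p)(1+h(p)/p+h(p)^2/p^2)$ is indeed positive, and the product converges absolutely by pretentiousness). That last observation is the most interesting structural point in your proposal. However, the final step is a genuine gap, as you yourself anticipate, and I do not think it closes along the lines you sketch. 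The positivity of the mean value of $f\chi$ does not by itself contradict $S(x)=O(1)$: your own orthogonality computation (together with Hal\'asz giving $\sum_{n\le x}f(n)\chi_0(n)=o(x)$) yields $\sum_{\chi(n)=\pm1,\,n\le x}f(n)\sim\pm\tfrac{m}{2}x$, and these two linear terms \emph{cancel exactly} when recombined into $S(x)$ — which is entirely consistent with bounded partial sums, just as $\sum_{n\le x}\chi(n)=O(1)$ is consistent with $\sum_{n\le x}\chi(n)^2\sim \tfrac{\phi(q)}{q}x$. Likewise, analyticity of $L(s,g)$ on $\Re(s)>0$ together with zero-propagation $\rho\mapsto\rho/3$ produces no contradiction, since these hypothetical zeros may simply accumulate at the boundary $\Re(s)=0$ without violating analyticity on the open half-plane. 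You would need a genuinely new ingredient — e.g.\ a lower bound showing $L(3s,g)$ has an uncancelled zero in $\Re(s)>0$ — and nothing in the proposal supplies it.

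The paper's route is entirely different: it avoids complex analysis of $L(s,g)$ and instead uses Klurman's correlation formula (Theorem~\ref{teorema correlacoes klurman}) to obtain a closed form for $\Lambda(H)=\lim_x\frac{1}{x}\sum_{n\le x}|\sum_{k=n+1}^{n+H}f(k)|^2$ as a weighted sum involving a multiplicative function $u$ supported on $4$-free integers. Bounded partial sums force $\Lambda(H)=O(1)$, hence $\Sigma(H)\ll1$; but the positivity structure of $u$ (in particular $u(p^3)p^3\ge1$ for $p\ge3$) forces $\Sigma(H)$ to grow like $H^{1/3}$ (or, after a bootstrap in the case $u(2)<0$, like $H^2$), which is the contradiction. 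This is a quantitative $L^2$ argument that is insensitive to the analytic-continuation issues your sketch runs into.
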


More generally, from the analysis presented in this paper, the interested reader can outline the details and prove that if $f=\mu_k(n)^2g$, where $\mu_k(n)^2$ is the indicator that $n$ is $k$-free and $g:\NN\to\{-1,1\}$ is a completely multiplicative function, then $f$ will have unbounded partial sums for all $k\geq 2$. The proof of this follows the lines of the proof of Theorem \ref{teorema correlacoes resembling mobius} if $k$ is even and the lines of the proof of Theorem \ref{teorema cube free} if $k$ is odd. The hard case is when $g$ is only multiplicative, which is the subject of future research. Further, we speculate that similarly to Conjecture \ref{conjecture squarefree}, if $f=\mu_k(n)^2g$, then $\sum_{n\leq x}f(n)=\Omega(x^{1/(2k)-\epsilon}) $, see the discussion in the introduction of \cite{aymonekfree}.

Let $\chi$ be a real and primitive Dirichlet character of conductor $q$ and let $\chi^*$ be a completely multiplicative extension of $\chi$, \textit{i.e.}, $\chi^*$ is completely multiplicative, $\chi^*(n)=\chi(n)$, if $\gcd(n,q)=1$, and at each prime $p|q$, $\chi^*(p)=\pm1$. Let $f_1=\mu^2\chi^*$ and
$f_2=\mu_2^2\chi^*$. Clearly $f_1$ and $f_2$ are $\chi$-pretentious. In \cite{aymoneresemblingmobius}, it has been proved that if $\chi$ is real and non-principal, and if we assume RH for $L(s,\chi)$, then the statement $\sum_{n\leq x}f_1(n)\ll x^{1/4+\epsilon}$ for any $\epsilon>0$ implies RH for $\zeta$. Towards this fact we have:
\begin{theorem}\label{teorema H^1/4 cancelacao} Let $f_1$ and $f_2$ be as above and let $\Lambda(H)$ be as in (\ref{equacao definicao Lambda introducao}). Then, in the squarefree case, $\Lambda(H)\ll H^{1/2}$; in the cubefree case $\Lambda(H)\ll H^{1/3}$.
\end{theorem}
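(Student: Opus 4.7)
The plan is to substitute Klurman's correlation formula (Theorem \ref{teorema correlacoes klurman}) into the identity
\begin{equation*}
\Lambda(H) = HS_0 + 2\sum_{h=1}^{H-1}(H-h)S_h
\end{equation*}
and exploit the multiplicative structure $S_h = C(1\ast u_i)(h)\lambda(h)$ for $h\geq 1$. Since $\chi^*$ is completely multiplicative with values in $\{-1,1\}$ at every prime, $(\chi^*)^2\equiv 1$ on $\NN$, so $f_i\chi = |f_i|\cdot\mathbf{1}_{(\cdot,q)=1}$, whose Dirichlet series factors as $\prod_{p\nmid q}(1+p^{-s})$ in the squarefree case and $\prod_{p\nmid q}(1+p^{-s}+p^{-2s})$ in the cubefree case. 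The Dirichlet-series identity encoded in Klurman's formula then forces the multiplicative function $u_i$ to be supported on perfect $k$-th powers, with $u_i(d^k)\ll d^{-k}$: here $k=2$ for $f_1$ and $k=3$ for $f_2$.

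The next step is to swap the $h$-sum with the decomposition by the $k$-full divisor $e=d^k$:
\begin{equation*}
\sum_{h=1}^{H-1}(H-h)S_h = C\sum_{d\leq (H-1)^{1/k}}u_i(d^k)\sum_{m\leq (H-1)/d^k}(H-md^k)\lambda(md^k).
\end{equation*}
Using periodicity of $\lambda$ modulo $q$ together with the orthogonality $\sum_{a\bmod q}\chi(a)\chi(a+d)=-1$ for $d\not\equiv 0\bmod q$ (with value $\phi(q)$ at $d\equiv 0$), Abel summation evaluates the inner sum as $HA_d/q + O(d^k)$, where $A_d:=\sum_{r=0}^{q-1}T_d(r)$ is built from the partial sums $T_d(r)=\sum_{m\leq r}\lambda(md^k)$ over a full period. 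Summed over $d\geq 1$, the main term $(H/q)\sum_{d\geq 1}u_i(d^k)A_d$ produces a closed-form multiple of $H$ which should cancel $HS_0$ exactly: this reflects the Plancherel-type identity $\sum_{h\in\mathbb{Z}}S_h = 0$ expressing the mean-zero nature of $f_i$, repackaged through the multiplicative decomposition.

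What remains after this cancellation are (i) the tail $H\sum_{d>H^{1/k}}u_i(d^k)A_d\ll H\cdot H^{-(k-1)/k}=H^{1/k}$, since $|u_i(d^k)A_d|\ll d^{-k}$, and (ii) the accumulated errors $\sum_{d\leq H^{1/k}}|u_i(d^k)|\cdot d^k\ll H^{1/k}$. Adding them yields $\Lambda(H)\ll H^{1/2}$ for $f_1$ and $\Lambda(H)\ll H^{1/3}$ for $f_2$. The main obstacle is verifying the precise cancellation of the linear-in-$H$ term against $HS_0$; this reduces to matching the normalization constant $C=C(f_i)$ of Klurman's formula with the Dirichlet-series values ($L(1,\chi)$, $1/\zeta(k)$, and the finite Euler factors at $p\mid q$) appearing in the explicit form of $f_i$, which can be done by a Perron-type contour argument or by direct manipulation of the Euler product.
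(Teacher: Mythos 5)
Your high-level strategy (substitute Klurman's correlation formula into $\Lambda(H)=HS_0+2\sum_{h=1}^{H-1}(H-h)S_h$, exploit that $u$ is supported on $k$-th powers with $u(d^k)\ll d^{-k}$, and use periodicity-plus-Abel-summation on the inner $h$-sum) points in the same general direction as the paper. But the central step is missing, and one of your supporting identities is wrong, so the argument does not close.

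The critical gap is the claimed cancellation of the main term $(H/q)\sum_d u(d^k)A_d$ against $HS_0$. You explicitly flag this as ``the main obstacle'' and offer a ``Plancherel-type identity $\sum_{h\in\mathbb{Z}}S_h=0$'' as the mechanism. That identity is not available: $S_h$ does not tend to $0$ (it is asymptotically a nonzero periodic-type function), so the sum does not converge, and its natural Ces\`aro regularization is precisely $\lim_H\Lambda(H)/H=0$ — i.e., it is essentially what you are trying to prove, not something you may invoke. The paper avoids this entirely by \emph{not} separating $HS_0$ from the rest: it reuses the closed-form identity \eqref{equacao Lambda(H)},
\begin{equation*}
\Lambda(H)=C\sum_{d\geq 1} u(d)\,d\sum_{\rad(R)\mid q}\mu^2(R)\sum_{g\mid\rad(q)}\frac{\mu(g)}{g^2}\,\Delta\!\left(\frac{Hg}{qdR}\right),\qquad \Delta(t)=\{t\}-\{t\}^2,
\end{equation*}
which was derived in the proof of Theorem~\ref{teorema correlacoes resembling mobius} via the Klurman--Mangerel computation of the weighted periodic sum. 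The function $\Delta$ \emph{already encodes} the cancellation you are trying to verify by hand: $\Delta$ is bounded, and $\Delta(t)\le t$ for $t<1$, so splitting at $d\asymp H$ and using $u(d)d\ge0$ supported on $k$-th powers with $u(d^k)d^k\asymp 1$ immediately gives $\sum_{d\le \alpha H}u(d)d\ll H^{1/k}$ and $H\sum_{d>\alpha H}u(d)\ll H^{1/k}$. That is the entire proof. Your plan effectively re-derives the inner-sum evaluation that Klurman--Mangerel already packaged into $\Delta$, but stops short of the closed form, so the cancellation remains unproven.

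Two further points. First, your orthogonality statement $\sum_{a\bmod q}\chi(a)\chi(a+d)=-1$ for $d\not\equiv 0\bmod q$ holds only for prime conductor; for general primitive $\chi$ of conductor $q$ the correct value is $q\prod_{p^l\|q}\bigl(\ind_{p^l\mid d}-p^{-1}\ind_{p^{l-1}\mid d}\bigr)$, so the periodic profile you feed into Abel summation is not the right one when $q$ is composite. Second, Lemma~\ref{lemma formula klurman} carries an extra finite sum over $R$ with $\rad(R)\mid q$ that you have suppressed; it is harmless but must be carried along, and in the paper's formula it appears explicitly as the middle sum. If you want to complete your route, the honest way is to reproduce the Klurman--Mangerel periodic-sum evaluation (their pp.~684--687), which is exactly what produces $\Delta$ and subsumes both the main-term cancellation and the composite-$q$ arithmetic in one stroke.
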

We can interpret $\frac{1}{x}\sum_{n\leq x} |\cdot|^2$ as an expectation $\EE |\cdot|^2$ where $n$ is a random number chosen with probability $\frac{1}{x}$ in the set $\{1,...,x\}\subset \NN$. Thus, for each fixed $x$, we have H\"{o}lder's inequality: $\EE |\cdot|\leq (\EE |\cdot|^2)^{1/2}$. By defining $\EE^*=\limsup_{x\to\infty} \EE$, Theorem \ref{teorema H^1/4 cancelacao} says that, in the squarefree case, for a certain proportion of $n\in\NN$, the short sum $|\sum_{k=n+1}^{n+H} f_1(k)|$ has $H^{1/4}$-cancellation, and similarly, we have $H^{1/6}$- cancellation in the cubefree case, which is, as mentioned above, consistent with GRH. It seems likely that we will have the same pattern if $f_k$ is the restriction of $\chi^*$ to the $k$-free integers.

\noindent \textbf{Acknowledgements.}  I would like to thank Oleksiy Klurman for fruitful email exchanges on this subject and for kindly explaining his (and Sacha Mangerel's) solution of Chudakov's conjecture. Also, I would like to thank Winston Heap and the anonymous referee for several remarks, corrections and useful suggestions. 

This paper is dedicated to the memory of my Phd advisor, Vladas Sidoravicius. I would like to thank Vladas for his great intuition, ideas and enthusiasm that played a fundamental role in my academic trajectory.

\section{Proofs of the main results}
\subsection*{Notation} Here $\mathbb{U}=\{z\in\CC:|z|\leq 1\}$. We use both $f(x)\ll g(x)$ and $f(x)=O(g(x))$ whenever there exists a constant $C>0$ such that for all  $x\geq1$ we have that $|f(x)|\leq C|g(x)|$. Further, $\ll_\delta$ means that the implicit constant may depend on $\delta$. The standard $f(x)=o(g(x))$ means that $\lim_{x\to\infty}\frac{f(x)}{g(x)}=0$. We say that $f(x)=\Omega(g(x))$ if $\limsup_{x\to\infty}\frac{|f(x)|}{g(x)}>0$. We let $\mathcal{P}$ for the set of primes and $p$ for a generic element of $\mathcal{P}$. The notation $p^k\| n$ means that $k$ is the largest power of $p$ for which $p^k$ divides $n$. The M\"obius function is denoted by $\mu$, \textit{i.e.}, the multiplicative function with support on the squarefree integers and such that at the primes $\mu(p)=-1$. Dirichlet convolution is denoted by $\ast$. Given a subset $A\subset\NN$, we denote by $\ind_A(n)$ the characteristic function of $A$. We let $\mu_2^2(n)=\ind[n\mbox{ is cubefree}]$, \textit{i.e.}, at power of primes $\mu_2^2(p^l)=\ind_{l\in\{0,1,2\}}$. Here $\rad(n)=\prod_{p|n}p$. We let $\NN(q)=\{m\in\NN:\rad(m)|q\}$. Finally, $\omega(k)$ is the number of distinct primes that divide a certain $k$. 

\subsection{Preliminaries} The proof of our results starts with the Proposition below, which is essentially due to Tao \cite{taodiscrepancy} Remark 3.1, Klurman \cite{klurmancorrelation} Lemma 4.3 and Klurman and Mangerel \cite{klurmanchudakov} Lemma 5.1. Hence, for the convenience of the reader, in Appendix \ref{appendix proposicao chi pretentious}, we only indicate how to obtain it from these results.
\begin{proposition}\label{proposicao chi pretentious} Suppose that $f:\NN\to\{-1,0,1\}$ is a multiplicative function such that $\sum_{n\leq x}f^2(n)=(c+o(1))x$ for some positive constant $c$, and further assume that at primes $f(p)^2=1$. If $f$ has bounded partial sums, then there exists a real and primitive Dirichlet character $\chi$ of conductor $q$ such that $\mathbb{D}(f,\chi)<\infty$.
\end{proposition}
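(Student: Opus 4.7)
The plan is to combine Hal\'asz-type mean value bounds with the much stronger rigidity imposed by bounded partial sums, following the template of Tao \cite{taodiscrepancy} and its refinements by Klurman \cite{klurmancorrelation} and Klurman--Mangerel \cite{klurmanchudakov}. First, I would exploit the analytic content of the hypothesis: since $\sum_{n\leq x} f(n) = O(1)$, partial summation shows that the Dirichlet series $F(s) = \sum_n f(n) n^{-s}$ extends to a bounded analytic function on the half-plane $\mathrm{Re}(s) > 0$. The positive density hypothesis $\sum_{n\leq x} f^2(n) = (c+o(1))x$ guarantees that only a very sparse set of primes can have $f(p)=0$; away from these $f^2 = 1$, so $f$ behaves like a multiplicative function of modulus one on most of $\NN$, and the standard pretentious machinery applies with $f$ itself (rather than some completely multiplicative majorant).

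Next I would invoke the Granville--Soundararajan pretentious form of Hal\'asz's theorem: the size of $\frac{1}{x}\sum_{n\leq x} f(n)$ is controlled by $\mathbb{D}(f, \chi n^{it}; x)$ optimized over characters $\chi$ and real $t$. Bounded partial sums is much stronger than merely vanishing mean, and comparing the Euler product of $F$ near the line $\mathrm{Re}(s)=0$ with that of $L(s, \chi)$ should force the Euler factors of $f$ to agree with those of some $\chi(n) n^{it}$, except on a sparse set of primes contributing a convergent amount to $\mathbb{D}$; quantitatively this gives $\mathbb{D}(f, \chi n^{it}) < \infty$. The main obstacle here is controlling the potential $x$-dependence and non-uniqueness of the twist $(\chi, t)$, as well as properly handling the zeros of $f$; this is the hard technical content of Tao's Remark~3.1 and Lemma~5.1 of \cite{klurmanchudakov}, and I would simply quote it.

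Finally, to reduce to a real primitive character, I would use that $f$ is real-valued. Conjugation gives $\mathbb{D}(f, \overline{\chi}(n) n^{-it}) < \infty$, so the triangle inequality yields $\mathbb{D}(\chi n^{it}, \overline{\chi} n^{-it}) < \infty$, i.e.\ $\sum_p (1 - \mathrm{Re}(\chi(p)^2 p^{2it}))/p < \infty$. Since $\chi^2$ is a Dirichlet character and $\sum_p 1/p$ diverges over the primes not dividing its conductor, this forces $\chi^2 = 1$ and $t = 0$, so $\chi$ is real. To pass from $\chi$ to its primitive inducing character $\chi^*$, observe that these agree except at the finitely many primes dividing the conductor, hence $\mathbb{D}(\chi, \chi^*) < \infty$, and $\mathbb{D}(f, \chi^*) < \infty$ follows from one last application of the triangle inequality.
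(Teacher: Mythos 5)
Your overall skeleton (land on $\chi n^{it}$-pretentiousness via Tao/Klurman machinery, then use reality of $f$ to kill $t$ and the imaginary part of $\chi$, then pass to the primitive inducing character) is the right shape, and your citations are the right ones. However, the way you describe the hardest step is not how the argument actually goes, and as stated it would not work. Hal\'asz's theorem is a one-sided implication: large distance $\mathbb{D}(f,\chi n^{it};x)$ for all admissible $(\chi,t)$ forces a small mean. It does \emph{not} tell you that a small (or even bounded) mean forces small distance --- the Liouville function has mean $o(1)$ and is not pretentious to any $\chi n^{it}$. Likewise, ``comparing the Euler product of $F(s)$ near $\mathrm{Re}(s)=0$ with that of $L(s,\chi)$'' is not a mechanism: boundedness of $F(s)$ in a half-plane says nothing usable about individual Euler factors. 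What actually carries the proof (and what Tao's Remark 3.1 and Klurman's Lemma 4.3 are about) is a \emph{two-point correlation} argument. Concretely, the paper forms the logarithmically averaged quantity
\begin{equation*}
\Lambda(H)=\frac{1}{\log x}\sum_{n\leq x}\frac{1}{n}\Bigl(\sum_{k=n+1}^{n+H}f(k)\Bigr)^{2},
\end{equation*}
bounds it by $O(1)$ using boundedness of partial sums, and lower-bounds the diagonal by $\gtrsim cH$ using the hypothesis $\sum_{n\leq x}f^{2}(n)=(c+o(1))x$. This forces, for a suitable fixed $H$ and along a subsequence of $x$, some logarithmic correlation $\frac{1}{\log x}\sum_{n\leq x}\frac{f(n)f(n+h)}{n}$ with $1\leq h\leq H$ to be bounded away from zero. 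Only then does Tao's two-point logarithmic Elliott-type theorem give a character twist $\chi(n)n^{it_x}$ with $\mathbb{D}(f,\chi n^{it_x};x)$ bounded, after which Lemma 2.5 of \cite{klurmanchudakov} removes the $x$-dependence of $t$. This correlation step is the missing idea in your write-up; without it there is no bridge from ``bounded partial sums'' to ``pretentious.''

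Your treatment of the endgame is a genuine and attractive alternative to the paper's. The paper first invokes Lemma 5.1 of \cite{klurmanchudakov} (using $f(p)^{2}=1$ at primes) to force $t=0$, and then shows $\chi$ is real by a separate argument on primes in a residue class where $|\mathrm{Re}\,\chi|<1$. Your conjugation trick --- $\mathbb{D}(f,\chi n^{it})<\infty$ together with $f=\overline{f}$ gives $\mathbb{D}(f,\overline{\chi}n^{-it})<\infty$, hence by the pretentious triangle inequality $\sum_{p}(1-\mathrm{Re}(\chi(p)^{2}p^{2it}))/p<\infty$, which forces $\chi^{2}=\chi_{0}$ and $t=0$ --- disposes of both issues at once and is cleaner. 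The reduction to the primitive inducing character via finitely many exceptional primes is also fine. So: fix the framing of the central step by making explicit the logarithmic correlation argument and the role of Tao's two-point theorem, and the rest of the proof is in good shape.
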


\begin{theorem}[Klurman, \cite{klurmancorrelation}, Corollary 3.3]\label{teorema klurman correlations pretentious to 1} Let $f:\NN\to\mathbb{U}$ be a multiplicative function such that $\mathbb{D}(1,f)<\infty$. Let $d\in\NN$. Then
\begin{equation*}
\lim_{x\to \infty}\frac{1}{x}\sum_{n\leq x}f(n)\overline{f(n+d)}=\sum_{r|d}\frac{G(r)}{r},
\end{equation*}
where $G(r)$ is given by:
\begin{equation*}
G(r)=\prod_{\substack{p^k\| r\\(k\geq 0) }} \left(|f\ast\mu(p^k)|^2+2\sum_{i=k+1}^\infty \frac{Re(f\ast\mu(p^k)\overline{f\ast\mu(p^i)} )}{p^{i-k}}\right).
\end{equation*}
\end{theorem}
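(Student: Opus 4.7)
The plan is to expand the correlation via Dirichlet convolution against the M\"obius function and evaluate the resulting sum by counting lattice points with the Chinese Remainder Theorem, in the spirit of classical correlation formulae for divisor-like multiplicative functions. Writing $f = (f\ast\mu)\ast 1$, so that $f(n) = \sum_{a\mid n}(f\ast\mu)(a)$ and similarly for $\overline{f(n+d)}$, substituting both expansions and swapping summations gives
\begin{equation*}
\frac{1}{x}\sum_{n\leq x}f(n)\overline{f(n+d)} = \frac{1}{x}\sum_{a,b}(f\ast\mu)(a)\,\overline{(f\ast\mu)(b)}\,\#\{n\leq x:a\mid n,\ b\mid n+d\}.
\end{equation*}
By CRT, the inner count equals $x/\lcm(a,b) + O(1)$ when $\gcd(a,b)\mid d$ and vanishes otherwise. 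Truncating the $(a,b)$-sum at a threshold $T = T(x)\to\infty$, the $O(1)$ errors contribute $O(T^2/x)$ and one is left with the candidate main term
\begin{equation*}
M(d):=\sum_{\substack{a,b\geq 1\\ \gcd(a,b)\mid d}}(f\ast\mu)(a)\,\overline{(f\ast\mu)(b)}\,\frac{\gcd(a,b)}{ab}.
\end{equation*}

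The next step is to identify $M(d)$ with $\sum_{r\mid d}G(r)/r$ by exploiting joint multiplicativity of the summand in $(a,b)$. For each prime $p$, writing $\alpha = v_p(a)$, $\beta = v_p(b)$ and $e_p = v_p(d)$, the local factor of $M(d)$ is
\begin{equation*}
\sum_{\substack{\alpha,\beta\geq 0\\ \min(\alpha,\beta)\leq e_p}} (f\ast\mu)(p^\alpha)\,\overline{(f\ast\mu)(p^\beta)}\,p^{\min(\alpha,\beta)-\alpha-\beta}.
\end{equation*}
Stratifying by $k=\min(\alpha,\beta)\in\{0,\dots,e_p\}$ and separating the three cases $\alpha=\beta=k$, $\alpha=k<\beta$, $\beta=k<\alpha$ produces the diagonal term $|(f\ast\mu)(p^k)|^2/p^k$ and two complex-conjugate off-diagonal tails, whose sum is exactly $G(p^k)/p^k$ in the notation of the theorem. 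Taking the product over primes and reinterpreting $r=\prod_p p^{k_p}$ with $k_p\leq e_p$ as a divisor of $d$ then reconstitutes $\sum_{r\mid d}G(r)/r$.

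The main obstacle is rigorously justifying the truncation, that is, showing that the tail
\begin{equation*}
\sum_{\max(a,b)>T}\frac{|(f\ast\mu)(a)|\,|(f\ast\mu)(b)|\,\gcd(a,b)}{ab}
\end{equation*}
tends to zero as $T\to\infty$. This is precisely where the hypothesis $\mathbb{D}(1,f)<\infty$ enters: combined with $|f|\leq 1$, Hal\'asz/Wirsing-type estimates yield $\sum_{n}|(f\ast\mu)(n)|/n<\infty$, so the double tail vanishes by dominated convergence once one bounds $\gcd(a,b)/(ab)\leq 1/\max(a,b)$. Calibrating $T=T(x)$ so that simultaneously $T^2/x\to 0$ and the residual tail tends to zero then gives the claim via a standard diagonal argument. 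The quantitative $\ell^1$ control on $f\ast\mu$ supplied by pretentiousness is really the heart of the matter; once it is in hand, the combinatorial reduction above is essentially mechanical.
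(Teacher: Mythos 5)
The paper does not prove this theorem: it is a cited result (Klurman, Corollary 3.3 of \cite{klurmancorrelation}), so there is no proof in the paper to compare against. That said, your proposed route has two genuine gaps that prevent it from establishing the statement as given.

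First, the $\ell^1$-control you invoke is not a consequence of the hypothesis. You claim that $\mathbb{D}(1,f)<\infty$ together with $|f|\leq 1$ yields $\sum_n |(f\ast\mu)(n)|/n<\infty$. By the Euler product this convergence is equivalent to $\sum_p |1-f(p)|/p<\infty$. For real-valued $f$ with $|f|\leq 1$ one has $|1-f(p)|=1-\mathrm{Re}\,f(p)$ and the implication is automatic, but for $\mathbb{U}$-valued (complex) $f$ one only has $|1-f(p)|\leq \sqrt{2}\,\sqrt{1-\mathrm{Re}\,f(p)}$, and that square root ruins the implication. Concretely, take $f(p)=e^{i\theta_p}$ with $\theta_p = 1/\log\log p$: then $\sum_p (1-\cos\theta_p)/p \asymp \sum_p \theta_p^2/p$ converges while $\sum_p |1-f(p)|/p\asymp\sum_p \theta_p/p$ diverges. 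Since the theorem is stated for $f:\NN\to\mathbb{U}$, this breaks your truncation argument and also your absolute-convergence justification for the Euler factorization of $M(d)$. (The Euler product defining the \emph{right-hand side} still makes sense, because the dominant contribution to the local factor is $-2(1-\mathrm{Re}\,f(p))/p$ which is absolutely summable; but your $M(d)$ manipulation needs more than that.) Klurman's actual proof handles the complex case and is substantially more delicate than a direct divisor-swap.

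Second, your treatment of the $O(1)$ errors from the lattice-point count is too crude. For fixed $x$, the count $\#\{n\leq x: a\mid n,\,b\mid n+d\}$ is only nonzero for $a\leq x$, $b\leq x+d$, so without any truncation the aggregate of the $O(1)$ errors is $\ll \frac{1}{x}\big(\sum_{a\leq x}|(f\ast\mu)(a)|\big)\big(\sum_{b\leq x+d}|(f\ast\mu)(b)|\big)$, and since $|(f\ast\mu)(n)|$ can be comparable to the divisor function this can be of order $x(\log x)^2$, which is far from $o(1)$. Introducing a cutoff $T$ and hoping for an $O(T^2/x)$ error ignores the tail $\max(a,b)>T$, and controlling that tail is precisely where the first gap bites. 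For the specific use in this paper (the theorem is only applied to the one-prime functions $F_p$) both issues disappear, since $F_p\ast\mu$ is supported on powers of the single prime $p$; but as a proof of the general $\mathbb{U}$-valued statement the argument is incomplete.
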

Now we will suppose that $f:\NN\to\mathbb{U}$ is multiplicative, $\mathbb{D}(f,\chi n^{it})<\infty$ for some $t\in\RR$ and for some primitive Dirichlet character $\chi$ of conductor $q$. We define $F$ to be the multiplicative function such that
\begin{equation}\label{equacao definicao F}
F(p^k)=\begin{cases}f(p^k) \overline{\chi(p^k)}p^{-ikt}, &\mbox{ if }p\nmid q \\
1, &\mbox{ if } p|q. \end{cases}
\end{equation}
For $p\nmid q$, let $F_p(\cdot)$ be the multiplicative function such that for each prime $\tilde{p}$,
\begin{equation*}
F_p(\tilde{p}^k)=\begin{cases} F(p^k), &\mbox{ if }\tilde{p}=p \\
1, &\mbox{ if } \tilde{p}\neq p. \end{cases}
\end{equation*}
For $p\nmid q$, let $M_p(F,\overline{F},d)$ be given by
\begin{equation*}
M_p(F,\overline{F},d)=\lim_{x\to\infty}\frac{1}{x}\sum_{n\leq x}F_p(n)\overline{F_p(n+d)}.
\end{equation*}
\begin{lemma}\label{lemma Klurman auxiliar teorema correlations} Suppose that $p\nmid q$ and that $p^n\|d$, where $n\geq 0$. Let $F$ be given by (\ref{equacao definicao F}). Then
\begin{equation*}
M_p(F,\overline{F},d)=\sum_{a=0}^n\left(\frac{|F\ast\mu(p^a)|^2}{p^a} +2\sum_{i=a+1}^\infty \frac{Re(F\ast\mu(p^a)\overline{F\ast\mu(p^i)} )}{p^{i}} \right).
\end{equation*}
\end{lemma}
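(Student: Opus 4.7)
The plan is to deduce this lemma directly from Theorem \ref{teorema klurman correlations pretentious to 1} (Klurman's correlation formula) applied to the single-prime multiplicative function $F_p$, and then to observe that the Euler product defining $G(r)$ collapses dramatically because $F_p$ is trivial outside the prime $p$.

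First I would verify that $F_p$ satisfies the hypotheses of Theorem \ref{teorema klurman correlations pretentious to 1}. Since $|f|\leq 1$, $|\chi|\leq 1$ and $|p^{-ikt}|=1$, we have $|F(p^k)|\leq 1$ and hence $F_p:\NN\to\mathbb{U}$. Also, $F_p(\tilde p)=1$ for every prime $\tilde p\neq p$, so
\begin{equation*}
\mathbb{D}(1,F_p)^2=\frac{1-\mathrm{Re}\,F(p)}{p}<\infty.
\end{equation*}
Thus Theorem \ref{teorema klurman correlations pretentious to 1} applies with $f$ replaced by $F_p$, and gives
\begin{equation*}
M_p(F,\overline{F},d)=\sum_{r\mid d}\frac{G(r)}{r},\qquad G(r)=\prod_{\tilde p^k\|r}\left(|F_p\ast\mu(\tilde p^k)|^2+2\sum_{i=k+1}^\infty \frac{\mathrm{Re}\bigl(F_p\ast\mu(\tilde p^k)\overline{F_p\ast\mu(\tilde p^i)}\bigr)}{\tilde p^{i-k}}\right).
\end{equation*}

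Next I would compute $F_p\ast\mu$ prime by prime. For $\tilde p\neq p$, the function $F_p$ is identically $1$ on powers of $\tilde p$, so $F_p\ast\mu(\tilde p^k)=1-1=0$ for every $k\geq 1$, while $F_p\ast\mu(1)=1$. Therefore each local factor of $G(r)$ attached to a prime $\tilde p\neq p$ dividing $r$ is $0$, which forces $G(r)=0$ unless $r$ is a power of $p$. Combined with the condition $r\mid d$ and $p^n\|d$, only the values $r=p^a$ with $0\leq a\leq n$ survive the sum. Moreover, at the prime $p$, the fact that $F_p(p^k)=F(p^k)$ for all $k\geq 0$ gives $F_p\ast\mu(p^k)=F\ast\mu(p^k)$.

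Putting these simplifications together,
\begin{equation*}
M_p(F,\overline{F},d)=\sum_{a=0}^n\frac{G(p^a)}{p^a}=\sum_{a=0}^n\frac{1}{p^a}\left(|F\ast\mu(p^a)|^2+2\sum_{i=a+1}^\infty\frac{\mathrm{Re}\bigl(F\ast\mu(p^a)\overline{F\ast\mu(p^i)}\bigr)}{p^{i-a}}\right),
\end{equation*}
and distributing the $1/p^a$ across the inner sum yields exactly the stated identity. The argument is essentially mechanical once Theorem \ref{teorema klurman correlations pretentious to 1} is in hand; the only step that requires any care is the verification that $G(r)$ vanishes for $r$ having any prime factor different from $p$, which I expect to be the main (but still routine) obstacle.
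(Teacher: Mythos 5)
Your argument is correct and follows exactly the same route as the paper: apply Theorem \ref{teorema klurman correlations pretentious to 1} to $F_p$, note that $F_p\ast\mu(\tilde p^k)=0$ for $\tilde p\neq p$ and $k\geq 1$ so that $G(r)$ vanishes unless $r$ is a power of $p$, and then read off the surviving terms. The one small point you gloss over (and the paper spells out) is that the Euler factors of $G(p^a)$ at primes $\tilde p\neq p$, where $\tilde p^0\|p^a$, equal $1$ rather than some other constant; this is immediate from $F_p\ast\mu(1)=1$ and $F_p\ast\mu(\tilde p^i)=0$ for $i\geq 1$, so the omission is cosmetic.
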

\begin{proof} For $p\nmid q$, observe that $\mathbb{D}(1,F_p)=\sqrt{\frac{1-Re(F(p))}{p}}$. Thus Theorem \ref{teorema klurman correlations pretentious to 1} is applicable to $F_p$:
\begin{equation*}
M_p(F,\overline{F},d)=\sum_{r|d}\frac{G_p(r)}{r},
\end{equation*}
where
\begin{equation*}
G_p(r)=\prod_{\substack{\tilde{p}^k\| r\\(k\geq 0) }} \left(|F_p\ast\mu(\tilde{p}^k)|^2+2\sum_{i=k+1}^\infty \frac{Re(F_p\ast\mu(\tilde{p}^k)\overline{F_p\ast\mu(\tilde{p}^i)} )}{\tilde{p}^{i-k}}\right).
\end{equation*}
For $\tilde{p}\neq p$ and $k\geq 1$, we have that $F_p\ast\mu(\tilde{p}^k)=F_p(\tilde{p}^k)-F_p(\tilde{p}^{k-1})=1-1=0$. Thus, if $r$ is divisible by some prime $\tilde{p}\neq p$, for $\tilde{p}^k\| r$, we have that $G_p(r)=0$. Hence, if $p^n\| d$, $n\geq 0$,
\begin{equation*}
M_p(F,\overline{F},d)=\sum_{a=0}^n\frac{G_p(p^a)}{p^a}.
\end{equation*}
For $a\geq 0$:
\begin{align*}
G_p(p^a)&= \left(|F_p\ast\mu(p^a)|^2+2\sum_{i=a+1}^\infty \frac{Re(F_p\ast\mu(p^a)\overline{F_p\ast\mu(p^i)} )}{p^{i-a}}\right)\\
& \times \prod_{\tilde{p}\neq p} \left(|F_p\ast\mu(1)|^2+2\sum_{i=1}^\infty \frac{Re(F_p\ast\mu(1)\overline{F_p\ast\mu(\tilde{p}^i)} )}{\tilde{p}^{i}}\right)\\
&=|F_p\ast\mu(p^a)|^2+2\sum_{i=a+1}^\infty \frac{Re(F_p\ast\mu(p^a)\overline{F_p\ast\mu(p^i)} )}{p^{i-a}}\\
&=|F\ast\mu(p^a)|^2+2\sum_{i=a+1}^\infty \frac{Re(F\ast\mu(p^a)\overline{F\ast\mu(p^i)} )}{p^{i-a}}.
\end{align*}
\end{proof}

\begin{theorem}[Klurman, \cite{klurmancorrelation}, Theorem 1.5, corrected version\footnote{Private communication; The formula (19) of \cite{klurmanchudakov} is correct. In our Lemma \ref{lemma formula klurman} and Appendix \ref{appendix formula klurman} we deduce the correct formulation of Theorem \ref{teorema correlacoes klurman} from formula (19) of \cite{klurmanchudakov}.}]\label{teorema correlacoes klurman} Let $f:\NN\to\mathbb{U}$ be multiplicative and such that $\mathbb{D}(f,\chi n^{it})<\infty$, for some $t\in\RR$ and for some primitive Dirichlet character $\chi$ of conductor $q$. Let $F$ be as in (\ref{equacao definicao F}). For $d\in\NN$, let
\begin{equation*}
S_d:=\lim_{x\to\infty}\frac{1}{x}\sum_{n\leq x}f(n)\overline{f(n+d)}.
\end{equation*}
Then
\begin{equation*}
S_d=\prod_{p\nmid q}M_p(F,\overline{F},d)\prod_{p^l\| q}M_{p^l}(f,\overline{f},d),
\end{equation*}
where
\begin{equation*}
M_{p^l}(f,\overline{f},d)=\begin{cases}0, &\mbox{ if }p^{l-1}\nmid d,\\
-\frac{1}{p},&\mbox{ if }p^{l-1}\| d,\\
\left(1-\frac{1}{p}\right)\sum_{j=0}^k\frac{|f(p^j)|^2}{p^j}-\frac{|f(p^{k+1})|^2}{p^{k+2}}, &\mbox{ if }p^{l+k}\| d, \mbox{ where }k\geq 0,   \end{cases}
\end{equation*}
and if $p^n\| d$ for some $n\geq 0$, $M_p(F,\overline{F},d)$ is given by Lemma \ref{lemma Klurman auxiliar teorema correlations}.
\end{theorem}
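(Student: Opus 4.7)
The plan is to deduce this corrected formulation of Klurman's Theorem 1.5 from formula (19) of \cite{klurmanchudakov}, which I would record as Lemma \ref{lemma formula klurman}. That formula already expresses $S_d$ as an Euler product of local correlations, so the task reduces to rewriting each local factor in the explicit form asserted here.

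For primes $p\nmid q$, I would first observe that the twist by $\chi(n)\overline{\chi(n+d)}\,n^{it}(n+d)^{-it}$ does not alter the local correlation at $p$: the character part depends only on residues modulo $q$, and the Archimedean factor tends to $1$ uniformly on $n\leq x$ as $x\to\infty$ for each fixed $d$. Hence the $p$-local correlation of $f$ coincides with that of the $1$-pretentious function $F$ of (\ref{equacao definicao F}), which by Lemma \ref{lemma Klurman auxiliar teorema correlations} equals the claimed $M_p(F,\overline{F},d)$.

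For primes $p|q$ with $p^l\|q$, $F$ is set to $1$ and one must compute $M_{p^l}(f,\overline{f},d)$ directly. I would split $n\leq x$ according to the pair of exponents $(a,b)$ with $p^a\|n$ and $p^b\|(n+d)$, noting that $f(p^j)=0$ for $j\geq l$ kills all but finitely many pair types, and that the joint densities can be read off from the standard fact that $p^a\|n$ has density $p^{-a}(1-p^{-1})$. The three cases in the theorem correspond exactly to the situations where no compatible pair survives (giving $0$), only an off-diagonal configuration at the boundary survives (giving $-1/p$), and several diagonal and off-diagonal configurations combine (giving the stated finite sum).

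The main obstacle will be the bookkeeping in the third case: one needs to add up the diagonal contributions $a=b=j\leq k$, each weighted by $|f(p^j)|^2 p^{-j}(1-p^{-1})$, together with the off-diagonal contribution at $a=k+1$, and verify that these collapse to the stated expression $(1-p^{-1})\sum_{j=0}^k|f(p^j)|^2/p^j-|f(p^{k+1})|^2/p^{k+2}$. A secondary technical point is convergence of the Euler product over $p\nmid q$, which follows from the hypothesis $\mathbb{D}(f,\chi n^{it})<\infty$: it controls $\sum_p(1-\operatorname{Re}F(p))/p$ and hence $\sum_p|M_p(F,\overline{F},d)-1|$ via standard pretentious-theory estimates already used in \cite{klurmancorrelation}.
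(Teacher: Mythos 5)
Your plan is internally inconsistent: the opening sentence says you will deduce the theorem from formula (19) of \cite{klurmanchudakov}, but the rest of the proposal never touches that formula. Formula (19) (recorded as Lemma \ref{lemma formula klurman}) is not an Euler product; it is a divisor sum
\[
S_d=\frac{C}{q}\sum_{\substack{R\mid d\\ \rad(R)\mid q}}\frac{|f(R)|^2}{R}\sum_{a=1}^{q}\chi(a)\overline{\chi(a+d/R)}\sum_{e\mid d/R}u(e),
\]
so the task of ``rewriting each local factor'' that you describe does not apply to it. What the paper actually does is cite Klurman's theorem (and the Klurman--Mangerel paper) as the source of truth, and then, in Appendix \ref{appendix formula klurman}, verify by a finite algebraic manipulation and an induction on $\omega(q)$ that the divisor-sum form of formula (19) is identical to the Euler-product form
\(\prod_{p\nmid q}M_p(F,\overline F,d)\prod_{p^{l}\|q}M_{p^{l}}(f,\overline f,d)\).
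That argument is an identity between two closed-form expressions; no limits, densities, or correlations are computed.

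Your paragraphs two and three, by contrast, attempt a from-scratch derivation: compute the ``local correlation at $p$'' by a joint-density count, then multiply across primes. The gap is that this multiplicativity of $S_d$ over $p$ is precisely the nontrivial content of Klurman's theorem, not something you can assume. Local densities of the form ``$p^a\|n$ and $p^b\|(n+d)$'' are indeed easy to compute, but their product over $p$ does not manifestly equal $\lim_{x\to\infty}\frac1x\sum_{n\le x}f(n)\overline{f(n+d)}$; establishing that requires the entire pretentious machinery (Halász-type estimates, the structure of $\mathbb{D}(f,\chi n^{it})<\infty$, etc.) which Klurman develops and which your sketch does not reproduce. Similarly, the assertion that the twist by $\chi(n)\overline{\chi(n+d)}n^{it}(n+d)^{-it}$ ``does not alter the local correlation'' is a heuristic, not a proof: for $t\neq0$ the Archimedean factor does not tend to $1$ pointwise, and controlling its contribution to the averaged correlation is part of the work Klurman does. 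In short, you are trying to reprove a cited theorem rather than check the algebraic equivalence the paper actually carries out, and the reprove attempt quietly assumes the hard step.
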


\subsection{Proof of Theorem \ref{teorema correlacoes resembling mobius}}
From now on, we will assume that $f:\NN\to\{-1,0,1\}$ is multiplicative, at primes $f(p)=\pm1$, and $f=\mu^2 f$. Further, in light of Proposition \ref{proposicao chi pretentious}, we will assume that $\mathbb{D}(f,\chi)<\infty$ for some real, non-principal and primitive character $\chi$ of conductor $q$.

Let $F$ be given by (\ref{equacao definicao F}). For each prime $p\nmid q$, let
\begin{equation}\label{equacao definicao h}
h(p)=1-\frac{2(1-F(p))}{p}-\frac{2F(p)}{p^2}.
\end{equation}
The hypothesis $\mathbb{D}(f,\chi)<\infty$ implies that the series $\sum_{p\nmid q}|1-h(p)|$ converges,
and hence it is well defined
\begin{equation}\label{equacao definicao C}
C:=\prod_{p\nmid q}h(p).
\end{equation}
Further, a simple calculation shows that $C\neq 0$.
\begin{lemma}\label{lemma auxiliar construcao da v} Let $f$, $F$ and $h$ be as above. Suppose that $p^n\| d$, $n\geq 0$. Then
\begin{equation*}
M_p(F,\overline{F},d)=\begin{cases}h(p),& \mbox{ if }n=0,\\
1-\frac{2}{p^2},& \mbox{ if }n=1,\\
1-\frac{1}{p^2},& \mbox{ if }n\geq 2.  \end{cases}
\end{equation*}
\end{lemma}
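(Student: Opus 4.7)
The plan is to apply Lemma \ref{lemma Klurman auxiliar teorema correlations} and exploit the fact that $f$ is supported on the squarefree integers together with the reality of $\chi$. Since $f=\mu^2 f$ and $f(p)=\pm1$ at primes, while $\chi$ is real, for $p\nmid q$ the multiplicative function $F$ defined in (\ref{equacao definicao F}) satisfies $F(p)=f(p)\chi(p)\in\{-1,1\}$ and $F(p^k)=0$ for every $k\geq 2$. In particular $F$ is real-valued, so all the $\mathrm{Re}(\cdot)$ factors in the formula of Lemma \ref{lemma Klurman auxiliar teorema correlations} disappear.

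First I would compute $u_a := F\ast\mu(p^a)$ in closed form for $p\nmid q$: by direct calculation, $u_0=1$, $u_1=F(p)-1$, $u_2=-F(p)$, and $u_i=0$ for all $i\geq 3$. Substituting this into
\begin{equation*}
M_p(F,\overline{F},d)=\sum_{a=0}^{n}\left(\frac{u_a^2}{p^a}+2\sum_{i=a+1}^{\infty}\frac{u_a u_i}{p^{i}}\right)
\end{equation*}
truncates every infinite sum to the range $i\in\{a+1,a+2\}\cap\{1,2\}$ and restricts the outer sum to $a\in\{0,1,2\}$ (higher $a$'s contribute $0$). Using $F(p)^2=1$, one gets the identities $(F(p)-1)^2=2(1-F(p))$ and $(F(p)-1)(-F(p))=-(1-F(p))$, which are the only nontrivial algebraic manipulations needed.

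Then I would handle the three cases separately. For $n=0$ only the $a=0$ term survives, and it equals
\begin{equation*}
1+\frac{2(F(p)-1)}{p}-\frac{2F(p)}{p^{2}}=1-\frac{2(1-F(p))}{p}-\frac{2F(p)}{p^{2}}=h(p),
\end{equation*}
which matches the definition (\ref{equacao definicao h}). For $n=1$ I would add the $a=1$ contribution
\begin{equation*}
\frac{2(1-F(p))}{p}-\frac{2(1-F(p))}{p^{2}},
\end{equation*}
and observe that the $p^{-1}$ terms cancel and the surviving $p^{-2}$ terms combine to $-2/p^{2}$, yielding $1-2/p^{2}$. For $n\geq 2$, the $a=2$ term contributes $u_2^2/p^2=1/p^2$ (with no tail since $u_i=0$ for $i\geq 3$), and higher $a$'s contribute nothing; adding this $1/p^{2}$ to the $n=1$ answer gives $1-1/p^{2}$, exactly as claimed.

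The only thing that requires any care is making sure the algebraic cancellations in the $n=1$ case are done correctly and that the independence from $n$ for $n\geq 2$ is justified by the vanishing $u_i=0$ for $i\geq 3$; once that is set up the computation is just an arithmetic verification. I do not foresee a conceptual obstacle.
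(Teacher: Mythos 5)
Your proposal is correct and follows essentially the same route as the paper: apply Lemma \ref{lemma Klurman auxiliar teorema correlations}, compute $F\ast\mu(p^a)$ for $a=0,1,2$ (with vanishing for $a\geq 3$), and verify the three cases by direct substitution using $F(p)^2=1$. The only difference is cosmetic — you pre-package the identities $(F(p)-1)^2=2(1-F(p))$ and $(F(p)-1)(-F(p))=-(1-F(p))$ up front, which makes the $n=1$ cancellation cleaner than in the paper's longer line-by-line expansion.
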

\begin{proof} We begin by observing that for each prime $p\nmid q$, and any power $k\geq0$:
\begin{equation*}
F(p^k)=\begin{cases}1,&\mbox{ if }k=0,\\
f(p)\chi(p),&\mbox{ if }k=1, \\
0,&\mbox{ if }k\geq 2.  \end{cases}
\end{equation*}
In particular\footnote{Later we will allow that $f(3)=0$, and hence that $F(3)=0$.}, for $p\nmid q$, $F(p)=\pm 1$. Thus, for $k\geq 1$:
\begin{equation*}
F\ast\mu(p^k)=F(p^k)-F(p^{k-1})=\begin{cases}F(p)-1,&\mbox{ if }k=1,\\
-F(p),&\mbox{ if }k=2, \\
0,&\mbox{ if }k\geq 3.  \end{cases}
\end{equation*}
Now suppose that $p^n\|d$, where $n\geq 0$. If $n=0$, by Lemma \ref{lemma Klurman auxiliar teorema correlations}
\begin{align*}
M_p(F,\overline{F},d)&=\sum_{a=0}^0\left(\frac{|F\ast\mu(p^a)|^2}{p^a} +2\sum_{i=a+1}^\infty \frac{Re(F\ast\mu(p^a)\overline{F\ast\mu(p^i)} )}{p^{i}} \right)\\
&=1+2\sum_{i=1}^\infty \frac{F\ast\mu(p^i)}{p^{i}}=1+2\left(\frac{F(p)-1}{p}-\frac{F(p)}{p^2}  \right)\\
&=h(p).
\end{align*}
If $n=1$:
\begin{align*}
M_p(F,\overline{F},d)&=\sum_{a=0}^1\left(\frac{|F\ast\mu(p^a)|^2}{p^a} +2\sum_{i=a+1}^\infty \frac{Re(F\ast\mu(p^a)\overline{F\ast\mu(p^i)} )}{p^{i}} \right)\\
&=h(p)+\frac{|F\ast\mu(p)|^2}{p} +2\sum_{i=2}^\infty \frac{F\ast\mu(p)F\ast\mu(p^i) }{p^{i}}\\
&=h(p)+\frac{|1-F(p)|^2}{p} +2\frac{(F(p)-1)(-F(p)) }{p^{2}}\\
&=1-\frac{2}{p^2}.
\end{align*}
If $n=2$:
\begin{align*}
M_p(F,\overline{F},d)&=\sum_{a=0}^2\left(\frac{|F\ast\mu(p^a)|^2}{p^a} +2\sum_{i=a+1}^\infty \frac{Re(F\ast\mu(p^a)\overline{F\ast\mu(p^i)} )}{p^{i}} \right)\\
&=1-\frac{2}{p^2}+\frac{|F\ast\mu(p^2)|^2}{p^2} +2\sum_{i=3}^\infty \frac{F\ast\mu(p^2)F\ast\mu(p^i) }{p^{i}}\\
&=1-\frac{2}{p^2}+\frac{1}{p^2}+2\cdot 0\\
&=1-\frac{1}{p^2}.
\end{align*}
If $n\geq 3$:
\begin{align*}
M_p(F,\overline{F},d)&=\sum_{a=0}^n\left(\frac{|F\ast\mu(p^a)|^2}{p^a} +2\sum_{i=a+1}^\infty \frac{Re(F\ast\mu(p^a)\overline{F\ast\mu(p^i)} )}{p^{i}} \right)\\
&=\sum_{a=0}^2\left(\frac{|F\ast\mu(p^a)|^2}{p^a} +2\sum_{i=a+1}^\infty \frac{Re(F\ast\mu(p^a)\overline{F\ast\mu(p^i)} )}{p^{i}} \right)\\
&=1-\frac{1}{p^2}.
\end{align*}
\end{proof}
Let $\NN(q):=\{m\in\NN:\rad(m)|q\}$.
\begin{lemma}\label{lemma construcao da v} Let $h$ be as in \eqref{equacao definicao h} and $v:\NN\to[-1,1]$ be given by
\begin{equation*}
v(d)=\prod_{p\nmid q}M_p(F,\overline{F},d).
\end{equation*}
Then, if $\gcd(d,q)=1$, $v(md)=v(d)$, for all $m\in\NN(q)$. Further, there exists a multiplicative function $g:\NN\to\RR$, such that $v(d)=Cg(d)$, for all $d\in\NN$, where $C$ is given by (\ref{equacao definicao C}) and $g$ is given by:\\
If $p|q$, $g(p^n)=1$; If $p\nmid q$
\begin{equation}\label{equacao definicao da g}
g(p^n)=\begin{cases}\frac{1}{h(p)}\left(1-\frac{2}{p^{2}}\right), &\mbox{ if }n=1,\\
\frac{1}{h(p)}\left(1-\frac{1}{p^{2}}\right),&\mbox{ if }n\geq 2.  \end{cases}
\end{equation}
\end{lemma}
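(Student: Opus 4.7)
The plan is to peel off the local factors supplied by Lemma \ref{lemma auxiliar construcao da v} and to reorganise the infinite product defining $v(d)$ so as to extract the constant $C$ and a multiplicative remainder $g$. A preliminary step is to verify that the product $v(d)$ converges and defines a number in $[-1,1]$: by Lemma \ref{lemma auxiliar construcao da v}, the factor $M_p(F,\overline{F},d)$ equals $h(p)$ for every $p\nmid qd$, and each such $h(p)$ lies in $[-1,1]$ (with $h(p)\to 1$ as $p\to\infty$); for the finitely many primes $p\mid d$, $p\nmid q$, the factor is $1-2/p^{2}$ or $1-1/p^{2}$, both in $[0,1]$. Since $\sum_{p\nmid q}|1-h(p)|<\infty$ under the pretentious hypothesis, the product converges absolutely and $|v(d)|\leq 1$.

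For the translation-invariance assertion, the key observation is that any $m\in\NN(q)$ has prime support contained in the set of primes dividing $q$. Hence, for every $p\nmid q$ one has $p\nmid m$, and consequently the exponent of $p$ in $md$ equals the exponent of $p$ in $d$. Since Lemma \ref{lemma auxiliar construcao da v} expresses the local factor $M_p(F,\overline{F},\cdot)$ purely in terms of this exponent, we get $M_p(F,\overline{F},md)=M_p(F,\overline{F},d)$ for every $p\nmid q$; taking the product over these primes yields $v(md)=v(d)$. (The coprimality hypothesis $\gcd(d,q)=1$ is not actually needed at this stage, but it is harmless and matches later uses.)

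For the factorisation $v=Cg$, I would split the product according to whether a prime divides $d$ or not:
\begin{equation*}
v(d)=\prod_{\substack{p\nmid q\\ p\nmid d}}h(p)\cdot\prod_{\substack{p\nmid q\\ p\mid d}}M_p(F,\overline{F},d)=C\prod_{\substack{p\nmid q\\ p\mid d}}\frac{M_p(F,\overline{F},d)}{h(p)},
\end{equation*}
and then define $g$ multiplicatively by $g(p^n)=1$ for $p\mid q$ and $g(p^n)=M_p(F,\overline{F},p^n)/h(p)$ for $p\nmid q$ and $n\geq 1$. Substituting the values from Lemma \ref{lemma auxiliar construcao da v} at $n=1$ and $n\geq 2$ gives exactly the formula \eqref{equacao definicao da g}. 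The only genuine technical point -- and the main, albeit minor, obstacle -- is to check that $h(p)\neq 0$ for every $p\nmid q$, so that the division in the last product is legitimate: a direct computation shows $h(p)=1-2/p^{2}$ when $F(p)=1$ and $h(p)=1-4/p+2/p^{2}$ when $F(p)=-1$, and neither expression vanishes at any prime, which is precisely the ``simple calculation'' alluded to below \eqref{equacao definicao C}.
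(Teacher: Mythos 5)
The proposal is correct and follows essentially the same route as the paper: split the Euler product over $p\nmid q$ into the primes dividing $d$ and those not, evaluate the local factors via Lemma \ref{lemma auxiliar construcao da v}, and renormalise by $h(p)$ to extract $C$ and the multiplicative function $g$. Your explicit verification that $h(p)\neq 0$ and the exponent-preservation argument for $v(md)=v(d)$ spell out details the paper's proof leaves implicit, but the decomposition and logic are the same.
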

\begin{proof}

Suppose that $\gcd(d,q)=1$. Thus, by Lemma \ref{lemma auxiliar construcao da v}:
\begin{align*}
v(d)&=\prod_{p\nmid q}M_p(F,\overline{F},d)=\prod_{\substack{p\nmid q\\p\nmid d }}M_p(F,\overline{F},d)\prod_{\substack{p^n\|d\\ (n\geq 1) \\  }} M_p(F,\overline{F},d)\\
&=\prod_{\substack{p\nmid q\\p\nmid d }}h(p)\prod_{\substack{p^n\|d\\ (n\geq 1) \\  }} M_p(F,\overline{F},d)=\prod_{\substack{p\nmid q}}h(p)\prod_{\substack{p^n\|d\\ (n\geq 1) \\  }}\frac{1}{h(p)} M_p(F,\overline{F},d)\\
&=Cg(d).
\end{align*}
If $m\in\NN(q)$, since $v(d)$ is given by a product that involves only primes $p\nmid q$, we have that $v(m)=C$. Thus, if $d=m\tilde{d}$, with $m\in\NN(q)$ and $\gcd(\tilde{d},q)=1$, $v(d)=v(\tilde{d})$, and hence, $g(m)=1$ for all $m\in\NN(q)$.
\end{proof}
\begin{lemma}\label{lemma construcao da u} Let $g$ be as in Lemma \ref{lemma construcao da v} and $u=g\ast \mu$. Then we have:
\begin{align*}
a)&\sum_{n=1}^\infty |u(n)|<\infty,\\
b)&\sum_{n\leq H}|u(n)|n=o(H), \\
c)&\sum_{\substack {n\leq H}}u(n)n\ind_{\gcd(n,6)=1}\gg\sqrt{H}.
\end{align*}
 \end{lemma}
\begin{proof}
We have for each prime $p$:
\begin{equation*}
u(p^n)=g(p^n)\mu(1)+g(p^{n-1})\mu(p)=g(p^n)-g(p^{n-1}).
\end{equation*}
If $p|q$, then $g(p^n)=1$ for all $n\geq0$, and hence $u(p^n)=0$ for all $n\geq 1$. If $p\nmid q$,
we have that
\begin{align*}
u(p)&=g(p)-1=\frac{1}{h(p)}\left(1-\frac{2}{p^{2}}\right)-1=\frac{1-h(p)}{h(p)}-\frac{2}{p^2h(p)}\\
&=\frac{1}{h(p)}\left(\frac{2(1-F(p))}{p}+\frac{2F(p)}{p^2}\right)-\frac{2}{p^2h(p)}\\
&=\frac{2(1-F(p))}{h(p)p}-\frac{2(1-F(p))}{p^2h(p)}.
\end{align*}
Further
\begin{align*}
u(p^2)&=g(p^2)-g(p)=\frac{1}{h(p)}\left(1-\frac{1}{p^{2}}\right)-\frac{1}{h(p)}\left(1-\frac{2}{p^{2}}\right)\\
&=\frac{1}{h(p)p^2}.
\end{align*}
For $n\geq 3$, $g(p^n)=g(p^{n-1})$, and hence, $u(p^n)=0$.

To prove that $\sum_{n=1}^\infty |u(n)|$ converges, we only need to show that the series (see \cite{tenenbaumlivro}, pg. 106, Theorem 2)
\begin{equation*}
\sum_{p\in\mathcal{P}}\sum_{m=1}^\infty|u(p^m)|
\end{equation*}
converges. Observe that $h(p)\to 1$ as $p\to\infty$. Hence, the assumption $\mathbb{D}(f,\chi)<\infty$ implies that
\begin{equation*}
\sum_{p\in\mathcal{P}}|u(p)|\leq \sum_{p\nmid q}\left(\frac{2(1-F(p))}{|h(p)|p}+\frac{2(1-F(p))}{p^2|h(p|)}\right)<\infty.
\end{equation*}
Now
\begin{align*}
\sum_{p\in\mathcal{P}}\sum_{m=2}^\infty|u(p^m)|&=\sum_{p\in\mathcal{P}}|u(p^2)|=\sum_{p\nmid q}\frac{1}{|h(p)|p^2}<\infty.
\end{align*}
This shows that the series $\sum_{n=1}^\infty |u(n)|$ converges. In particular, $\sum_{n=1}^\infty \frac{|u(n)|n}{n}$ converges, and hence, by Kroenecker's Lemma (see \cite{shiryaev}, pg. 390 Lemma 2), we have that $\sum_{n\leq H}|u(n)|n=o(H)$.

Now observe that when $F(p)=1$, $h(p)=1-\frac{1}{p^2}>0$ for all primes $p$. When $F(p)=-1$, $h(p)= 1-\frac{4}{p}+\frac{2}{p^2}$. As a function of $p$, $1-\frac{4}{p}+\frac{2}{p^2}$ is increasing for $p>1$, and hence for $p\geq 5$, $h(p)\geq 1-\frac{4}{5}+\frac{2}{5^2}=\frac{7}{25}$. In both cases $F(p)=\pm1$, we have that $h(p)\leq1$ for all $p\geq 5$, and hence $\frac{1}{h(p)}=u(p^2)p^2\geq1$. This implies that $\sum_{\substack {n\leq H}}u(n)n\ind_{\gcd(n,6)=1}\geq \sum_{n\leq H}\ind_{\NN}(\sqrt{n})\mu^2(\sqrt{n})\ind_{\gcd(n,6q)=1}\gg \sqrt{H}$.
\end{proof}
The Lemma below is formula (19) of \cite{klurmanchudakov}, which is implicit in the proof of Theorem \ref{teorema correlacoes klurman}. For the convenience of the reader, we do reverse engineering in Appendix \ref{appendix formula klurman}, \textit{i.e.}, we deduce the formula below from Theorem \ref{teorema correlacoes klurman}.
\begin{lemma}\label{lemma formula klurman} Let $u$ be as in Lemma \ref{lemma construcao da u}. If $\mathbb{D}(f,\chi)<\infty$, where $\chi$ is a primitive Dirichlet character of conductor $q$, then there exists a constant $C=C(f)$ such that
\begin{equation*}
S_d:=\lim_{x\to\infty} \frac{1}{x}\sum_{n\leq x}f(n)\overline{f(n+d)}=\frac{C}{q}\sum_{ \substack{ R|d \\ \rad(R)|q}}\frac{|f(R)|^2}{R}\sum_{a=1}^q \chi(a)\overline{\chi(a+d/R)}\sum_{e|d/R}u(e).
\end{equation*}
\end{lemma}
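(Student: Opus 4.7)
The plan is to start from Theorem~\ref{teorema correlacoes klurman} and rearrange the resulting product of local factors into the claimed form. Write $d=m\tilde d$ with $\rad(m)\mid q$ and $\gcd(\tilde d,q)=1$; then a divisor $R\mid d$ with $\rad(R)\mid q$ is precisely a divisor $R\mid m$, and the divisors of $d/R$ coprime to $q$ coincide with those of $\tilde d$. I would first handle the factor coming from primes $p\nmid q$: by Lemma~\ref{lemma construcao da v}, $\prod_{p\nmid q}M_p(F,\overline F,d)=v(d)=Cg(\tilde d)$, and since $g=1\ast u$ and $u(p^k)=0$ for every $p\mid q$ by Lemma~\ref{lemma construcao da u}, one has $g(\tilde d)=\sum_{e\mid\tilde d}u(e)=\sum_{e\mid d/R}u(e)$ for every admissible $R$. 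This is exactly the $R$-independent inner factor in the right-hand side of the claim.

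Next I would deal with the factor $\prod_{p^l\|q}M_{p^l}(f,\overline f,d)$ via CRT. Since $\chi$ is primitive mod $q$, write $\chi=\prod_{p^l\|q}\chi_{p^l}$ with each $\chi_{p^l}$ primitive mod $p^l$, and parameterise $R=\prod_{p^l\|q}p^{r_p}$ with $0\le r_p\le v_p(m)$. Both the character sum and the sum over $R$ on the right-hand side of the claim then factorise prime-by-prime, so the identity reduces to showing, for each prime $p\mid q$,
\[
M_{p^l}(f,\overline f,d)=\frac{1}{p^l}\sum_{r=0}^{a}\frac{|f(p^r)|^2}{p^r}\sum_{a'=1}^{p^l}\chi_{p^l}(a')\chi_{p^l}(a'+u\,p^{a-r}),
\]
where $a=v_p(d)$ and $u$ is a unit mod $p^l$ determined by $\tilde d$.

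Third, I would evaluate the inner character sum using the standard identity, valid for a primitive character $\chi_{p^l}$ mod $p^l$ and an integer $b$ with $v_p(b)=k$,
\[
\sum_{a'=1}^{p^l}\chi_{p^l}(a')\chi_{p^l}(a'+b)=\begin{cases}0,&0\le k\le l-2,\\ -p^{l-1},&k=l-1,\\ \varphi(p^l),&k\ge l,\end{cases}
\]
the third case being immediate from $\chi_{p^l}(a')^2=\ind_{\gcd(a',p)=1}$, and the other two being either vacuous (since $\chi$ real and primitive forces $l=1$ for odd $p$) or a small direct check at $p=2$, $l\in\{2,3\}$. Splitting the $r$-range according to whether $a-r\le l-2$, $=l-1$, or $\ge l$ and collecting terms then recovers each of the three cases of $M_{p^l}(f,\overline f,d)$ listed in Theorem~\ref{teorema correlacoes klurman}, and multiplying back the $p\nmid q$ factor $Cg(\tilde d)$ completes the identity.

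The main obstacle is the prime-by-prime bookkeeping in the last step: after truncating the $r$-sum one must algebraically recognise the resulting telescoping-geometric expression as $(1-\tfrac{1}{p})\sum_{j=0}^{k}|f(p^j)|^2/p^j-|f(p^{k+1})|^2/p^{k+2}$ appearing in Klurman's formula. Once the three character-sum evaluations above are in place this is a purely mechanical verification, but it is the step where one genuinely uses that $\chi$ is \emph{real} and primitive.
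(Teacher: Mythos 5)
Your argument is essentially the paper's: both reverse-engineer the identity from Theorem~\ref{teorema correlacoes klurman}, pull out the $p\nmid q$ factor as $Cg(\tilde d)=v(d)$ via Lemmas~\ref{lemma construcao da v} and~\ref{lemma construcao da u}, and then reduce the remaining identity to a local computation at each prime $p\mid q$ that is checked in the three cases $v_p(d)\le l-2$, $=l-1$, $\ge l$. The only cosmetic difference is that you factorise the character sum by CRT directly, whereas the paper cites the explicit product formula for $\sum_a\chi(a)\overline{\chi(a+d/R)}$ from \cite{klurmanchudakov} and then organises the same local verification as an induction on $\omega(q)$; beware also the notational clash between the multiplicative function $u$ of Lemma~\ref{lemma construcao da u} and the auxiliary unit $u\bmod p^l$ in your display.
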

In our case, $|f(R)|^2=\mu^2(R)$ and $C$ is given by (\ref{equacao definicao C}). A short calculation shows that
\begin{equation*}
\Lambda(H):=\lim_{x\to\infty}\frac{1}{x}\sum_{n\leq x}\bigg{|}\sum_{k=n+1}^{n+H} f(k) \bigg{|}^2=\sum_{|h|\leq H}(H-|h|)S_{|h|},
\end{equation*}
and by Lemma \ref{lemma formula klurman}, after an interchange in the order of summation we obtain that
\begin{equation*}
\Lambda(H)=\frac{C}{q}\sum_{d=1}^\infty u(d)\sum_{\rad(R)|q}\frac{\mu^2(R)}{R}\sum_{\substack{|h|\leq H \\ R|h,\,d|h/R}}(H-|h|)S_\chi(|h|/R),
\end{equation*}
where the inner sum above, is defined and computed as in the same way of \cite{klurmanchudakov} (pg. 684 - 687):
\begin{align*}
&S_\chi(h):=\sum_{a=1}^q\chi(a)\overline{\chi(a+h)},\\
&\sum_{\substack{|h|\leq H \\ R|h,\,d|h/R}}(H-|h|)S_\chi(|h|/R)=qdR\sum_{g|\rad(q)}\frac{\mu(g)}{g^2}\Delta\left(\frac{Hg}{qdR}\right),
\end{align*}
where $\Delta(t)=\{t\}-\{t\}^2$. We thus arrive at:
\begin{equation}\label{equacao Lambda(H)}
\Lambda(H)=C\sum_{d=1}^\infty u(d)d\sum_{\rad(R)|q}\mu^2(R)\sum_{g|\rad(q)}\frac{\mu(g)}{g^2}\Delta\left(\frac{Hg}{qdR}\right).
\end{equation}
Let $\|t\|=\min\{\{t\},1-\{t\}\}$. A short calculation shows that $4\Delta(t)-\Delta(2t)=2\|t\|$. Hence
\begin{equation*}
4\Lambda(qH)-\Lambda(2qH)=2C\sum_{d=1}^\infty u(d)d\sum_{\rad(R)|q}\mu^2(R)\sum_{g|\rad(q)}\frac{\mu(g)}{g^2}\norma\frac{Hg}{dR}\norma.
\end{equation*}
Thus, we have proved:
\begin{lemma}\label{lemma ainda nao sei o nome} Assume that $f$ has bounded partial sums. Then $\Lambda(H)\ll 1$ and hence
\begin{equation*}
S(H):=\sum_{d=1}^\infty u(d)d\sum_{\rad(R)|q}\mu^2(R)\sum_{g|\rad(q)}\frac{\mu(g)}{g^2}\norma\frac{Hg}{dR}\norma\ll 1.
\end{equation*}
\end{lemma}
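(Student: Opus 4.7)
The plan is to establish the two assertions in sequence, since the statement bundles together an easy observation with a cleanup of the computation done just above.

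First I would verify that $\Lambda(H)\ll 1$ under the hypothesis of bounded partial sums. If $M:=\sup_x|\sum_{n\leq x}f(n)|<\infty$, then for every $n$ and every $H$ the telescoping identity $\sum_{k=n+1}^{n+H}f(k)=\sum_{k\leq n+H}f(k)-\sum_{k\leq n}f(k)$ gives $\left|\sum_{k=n+1}^{n+H}f(k)\right|\leq 2M$. Squaring, averaging in $n$ over $[1,x]$, and passing to the limit in $x$ (the existence of which is guaranteed by Lemma \ref{lemma formula klurman} through the expansion $\Lambda(H)=\sum_{|h|\leq H}(H-|h|)S_{|h|}$), we obtain $\Lambda(H)\leq 4M^2$ uniformly in $H$.

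Next I would record the passage from $\Lambda$ to $S$. Starting from the closed form (\ref{equacao Lambda(H)}) for $\Lambda(H)$, which was derived above by interchanging summations---this interchange being justified by the absolute convergence $\sum_n |u(n)|<\infty$ in part (a) of Lemma \ref{lemma construcao da u}, together with the finiteness of the inner sums over $R\mid q$ and $g\mid\rad(q)$---I would apply the elementary identity $4\Delta(t)-\Delta(2t)=2\|t\|$, which holds for every real $t$, to the argument $t=Hg/(qdR)$. Substituting $qH$ and $2qH$ for $H$ in (\ref{equacao Lambda(H)}) and forming the difference pointwise in $d,R,g$ yields
\begin{equation*}
4\Lambda(qH)-\Lambda(2qH)=2C\sum_{d=1}^\infty u(d)d\sum_{\rad(R)|q}\mu^2(R)\sum_{g|\rad(q)}\frac{\mu(g)}{g^2}\norma\frac{Hg}{dR}\norma=2C\cdot S(H).
\end{equation*}

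Finally, since $C\neq 0$ by the remark right after (\ref{equacao definicao C}), and since the bound $\Lambda(H)\ll 1$ just established applies to both $\Lambda(qH)$ and $\Lambda(2qH)$, we conclude $S(H)=(4\Lambda(qH)-\Lambda(2qH))/(2C)\ll 1$. There is essentially no obstacle here; the only subtlety is ensuring that the interchange of summations used to reach (\ref{equacao Lambda(H)}) is legitimate, which it is because the full series $\sum_d|u(d)|d\sum_{R,g}(\cdots)$ can be seen to converge absolutely once one expands $\Delta$ as a bounded function and uses the rapid decay of $u$ on cubefree integers supplied by Lemma \ref{lemma construcao da u}.
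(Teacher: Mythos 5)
Your proposal is correct and follows exactly the paper's route: the paper derives the displayed identity $4\Lambda(qH)-\Lambda(2qH)=2C\,S(H)$ from formula (\ref{equacao Lambda(H)}) and the elementary fact $4\Delta(t)-\Delta(2t)=2\|t\|$, then notes that bounded partial sums trivially give $\Lambda(H)\ll 1$ (via $|\sum_{k=n+1}^{n+H}f(k)|\le 2\sup_x|\sum_{n\le x}f(n)|$), and concludes the lemma as an immediate consequence since $C\neq 0$. Your added remarks on existence of the limit and absolute convergence justifying the interchange are sound but not substantively different from what the paper leaves implicit.
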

In Lemma 5.5 of \cite{klurmanchudakov}, it has been proved that for any $t>0$,
\begin{equation}\label{equacao soma positiva da mobius}
\sum_{g|\rad(q)/2^\kappa}\frac{\mu(g)}{g^2}\| gt\|\geq0,
\end{equation}
where $\kappa=1$ if $q$ is even, and $\kappa=0$ otherwise. Now we establish Lemma \ref{lemma quase matador} below whose proof is essentially the one contained in Proposition 5.3 of \cite{klurmanchudakov}. Since the argument is short, we present it for the convenience of the reader:
\begin{lemma}\label{lemma quase matador} Let $\kappa=\ind_{2|q}$. Assume that $f$ has bounded partial sums. Then
\begin{equation*}
\Sigma(H):=\sum_{d=1}^\infty u(d)d\sum_{\rad(R)|q}\mu^2(R)\sum_{g|\rad(q)/2^\kappa}\frac{\mu(g)}{g^2}\norma\frac{Hg}{dR}\norma\ll 1.
\end{equation*}
\end{lemma}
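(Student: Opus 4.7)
The plan is to deduce $\Sigma(H) \ll 1$ from the bound $S(H) \ll 1$ of Lemma \ref{lemma ainda nao sei o nome} by relating the two quantities. If $q$ is odd, then $\kappa=0$ and $\rad(q)/2^\kappa = \rad(q)$, so $\Sigma(H) = S(H)$ and there is nothing more to prove; hence we may assume $q$ is even and $\kappa=1$.

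In this case every divisor $g \mid \rad(q)$ is uniquely of the form $g_1$ or $2g_1$ with $g_1 \mid \rad(q)/2$ odd, and $\mu(2g_1) = -\mu(g_1)$. Splitting the inner $g$-sum in $S(H)$ according to this dichotomy and relabelling $2H$ in the second piece produces the key identity
\begin{equation*}
S(H) \;=\; \Sigma(H) \;-\; \tfrac{1}{4}\Sigma(2H).
\end{equation*}
Iterating this $k$ times gives
\begin{equation*}
\Sigma(H) \;=\; \sum_{j=0}^{k-1}\frac{S(2^j H)}{4^j} \;+\; \frac{\Sigma(2^k H)}{4^k}.
\end{equation*}

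The crux is a crude uniform upper bound of the form $\Sigma(H) \ll H$, so that the remainder term $\Sigma(2^k H)/4^k = O(H/2^k)$ vanishes as $k \to \infty$. Since the outer sums over $R$ with $\rad(R)\mid q$ and $g\mid \rad(q)/2^\kappa$ are finite sums of terms depending only on $q$, it suffices to bound, for each fixed $g,R$,
\begin{equation*}
\sum_{d=1}^{\infty}|u(d)|\,d\,\norma\frac{gH}{dR}\norma \;\ll\; H.
\end{equation*}
Setting $T := gH/R$ and splitting at $d = T$: for $d\leq T$ use $\|\cdot\|\leq 1/2$ and invoke part (b) of Lemma \ref{lemma construcao da u} to get a contribution $o(T)$; for $d>T$ use $\|t\|\leq |t|$ together with part (a) to get a contribution $T \sum_{d>T}|u(d)| = O(T)$. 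Adding these yields $O(H)$.

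Letting $k \to \infty$ and using $|S(2^j H)| \ll 1$ uniformly by Lemma \ref{lemma ainda nao sei o nome}, the tail vanishes and the geometric series converges, giving
\begin{equation*}
\Sigma(H) \;=\; \sum_{j=0}^{\infty}\frac{S(2^j H)}{4^j} \;\ll\; \sum_{j\geq 0}4^{-j} \;\ll\; 1.
\end{equation*}
The main obstacle is the crude bound $\Sigma(H)\ll H$: naively $\sum_d |u(d)|d$ diverges, so the splitting argument combining both the absolute convergence of $\sum |u(d)|$ (part (a) of Lemma \ref{lemma construcao da u}) and the $o(T)$ estimate for $\sum_{d\leq T}|u(d)|d$ (part (b)) is essential. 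Note that positivity of the inner $g$-sum via \eqref{equacao soma positiva da mobius} is not needed for this argument, though it will presumably be invoked in the next step of the paper.
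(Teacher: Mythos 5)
Your proof is correct and follows essentially the same route as the paper: the identity $S(H)=\Sigma(H)-\tfrac14\Sigma(2H)$, the telescoping sum, and the crude bound $\Sigma(H)\ll H$ obtained by splitting the $d$-sum and invoking parts (a) and (b) of Lemma \ref{lemma construcao da u}. One small point worth making explicit is that the sum over $R$ with $\rad(R)\mid q$ is a finite sum only \emph{because of} the weight $\mu^2(R)$ (respectively $\mu_2^2(R)$ in the cubefree case); without that weight it would be over all of $\NN(q)$, which is infinite — but with the weight your $\ll H$ bound is correct, and your final observation that \eqref{equacao soma positiva da mobius} is not needed here matches the paper.
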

\begin{proof} Let $S(H)$ be as in Lemma \ref{lemma ainda nao sei o nome}. If $q$ is odd, then the result follows from Lemma \ref{lemma ainda nao sei o nome}. Assume then that $q$ is even. We have that
\begin{equation*}
\sum_{g|\rad(q)}\frac{\mu(g)}{g^2}\norma\frac{Hg}{dR}\norma=\sum_{g|\rad(q)/2^\kappa}\frac{\mu(g)}{g^2}\norma\frac{Hg}{dR}\norma-\frac{1}{4}\sum_{g|\rad(q)/2^\kappa}\frac{\mu(g)}{g^2}\norma\frac{Hg}{2dR}\norma,
\end{equation*}
and hence $S(H)=\Sigma(H)-\frac{1}{4}\Sigma(2H)$. Let
\begin{equation*}
T:=\sup_{H}|S(H)|.
\end{equation*}
From Lemma \ref{lemma ainda nao sei o nome}, it is clear that $T<\infty$. We now have for any non-negative integer $K$:
\begin{align*}
\Sigma(H)-\frac{1}{4^K}\Sigma(2^KH)=\sum_{k=0}^{K-1}\frac{1}{4^k}\left(\Sigma(2^kH)-\frac{1}{4}\Sigma(2^{k+1}H)\right)\ll T\sum_{k=0}^{K-1}\frac{1}{4^k}\ll T.
\end{align*}
Now, we recall from Lemma \ref{lemma construcao da u} that $\sum_{d=1}^\infty |u(d)|<\infty$ and $\sum_{n\leq H}|u(n)|n=o(H)$. Observe that for fixed $H$, $d\geq 4H\rad(q)$ implies
\begin{equation*}
\Sigma(H)\ll \sum_{d\leq 4H\rad(q)}|u(d)|d+H\sum_{d\geq 4H\rad(q)}|u(d)|\ll H.
\end{equation*}
In particular, $\frac{1}{4^K}\Sigma(2^KH)\ll \frac{H}{2^K}\to0$ as $K\to\infty$, which concludes the proof.
\end{proof}

\begin{proof}[Proof of Theorem \ref{teorema correlacoes resembling mobius}] From the proof of Lemma \ref{lemma construcao da u}, we have that the function $u$ can be negative only at powers of $2$ or $3$. Thus, we split the proof in 4 possibilities: $F(2),F(3)\geq 0$; $F(2)=-1$ and $F(3)\geq 0$; $F(3)=-1$ and $F(2)\geq 0$; and finally $F(2)=F(3)=-1$. In each of this situations  we will argue by contradiction, \textit{i.e.}, we will suppose that $f$ has bounded partial sums, and hence Lemma \ref{lemma quase matador} holds. By Lemma \ref{lemma construcao da u}, either $u(2),u(4)<0$ or $u(2)=0$ and $u(4)\geq 0$, and similarly, either $u(3),u(9)<0$ or $u(3)=0$ and $u(9)\geq0$. 

\textit{Case 1:}  $u(2)=u(3)=0$. Define
\begin{equation*}
\mathcal{M}(H):=\sum_{\substack{d=1\\ \gcd(d,2)=1}}^\infty u(d)d\sum_{\rad(R)|q}\mu^2(R)\sum_{g|\rad(q)/2^k}\frac{\mu(g)}{g^2}\norma\frac{Hg}{dR}\norma.
\end{equation*}
Since by (\ref{equacao soma positiva da mobius}) the inner sum above is positive, $u(2)=u(3)=0$ and $u(9),u(4)\geq0$, we have that $\mathcal{M}(H)>0$ and $u(4)4\mathcal{M}(H/4)\geq0$, and as $O(1)=\Sigma(H)=\mathcal{M}(H)+u(4)4\mathcal{M}(H/4)$, we have that $\mathcal{M}(H)=O(1)$. By interchanging the order of summation between $\rad(R)|q$ and $d$, the same argument allow us to conclude that
\begin{equation*}
\sum_{\substack{d=1\\ \gcd(d,2)=1}}^\infty u(d)d\sum_{g|\rad(q)/2^k}\frac{\mu(g)}{g^2}\norma\frac{Hg}{d}\norma=O(1).
\end{equation*}
 Let $d_m=2m+1$, where $m\in\{1,2,...,M\}$. Let $H=\frac{1}{2}\lcm(d_1,...,d_M)$. By Lemma \ref{lemma construcao da u}, we then obtain
\begin{equation*}
O(1)=\sum_{\substack{d=1\\ \gcd(d,2)=1}}^\infty u(d)d\sum_{g|\rad(q)/2^k}\frac{\mu(g)}{g^2}\norma\frac{Hg}{d}\norma\geq\frac{1}{2}\prod_{p|q}\left(1-\frac{1}{p^2}\right)\sum_{\substack{d\leq 2M\\ \gcd(d,6)=1}}u(d)d\gg \sqrt{M},
\end{equation*}
which is a contradiction for large $M$. 

\textit{Case 2}: $u(2)<0$ and $u(3)\geq0$ and consequently $u(9)\geq0$. A simple calculation shows that $u(2)2=-4$ and $u(4)4=-2$. We thus have $\Sigma(H)=\mathcal{M}(H)-4\mathcal{M}(H/2)-2\mathcal{M}(H/4)$. Let $T=\sup_H|\Sigma(H)|<\infty$.  Thus for all $H\gg1$
\begin{equation}\label{equacao M(4H)}
\mathcal{M}(4H)=4\mathcal{M}(2H)+2\mathcal{M}(H)+\Sigma(4H)\geq 4\mathcal{M}(2H)+2\mathcal{M}(H)-T.
\end{equation}
As before, by letting $M\gg T^2$ and $H_0=\frac{1}{4}\lcm(d_1,...,d_M)$, for $d\leq 2M$ and $\gcd(dg,2)=1$, we have $\|2H_0g/d\|=1/2$ and $\|H_0g/d\|=1/4$ and hence, we can make $\mathcal{M}(2H_0)\geq 2T$ and $\mathcal{M}(H_0)\geq2 T$. We claim: For all $k\geq 0$
\begin{equation*}
\mathcal{M}(2^k\cdot 4H_0)\geq 4^k\cdot 11T.
\end{equation*}
We prove by induction on $k$. For $k=0$, we have $\mathcal{M}(4H_0)\geq 4\mathcal{M}(2H_0)+2\mathcal{M}(H_0)-T\geq 11T$. Suppose now that the claim holds for all $n\leq k-1$, and consider $n=k\geq1$. By (\ref{equacao M(4H)}):
\begin{align*}
\mathcal{M}(2^k\cdot 4H_0)&\geq 4\mathcal{M}(2^k\cdot 2H_0)+2\mathcal{M}(2^k\cdot H_0)-T\\
&\geq 4\mathcal{M}(2^{k-1}\cdot 4H_0)+2\mathcal{M}(2^{k-2}4\cdot H_0)-T \\
&\geq4\cdot 4^{k-1}11T+4T-T\\
&\geq 4^k\cdot11T.
\end{align*}
Let now $H_k=2^k\cdot 4H_0$. Thus we have proved that $\mathcal{M}(H_k)\geq H_k^2\frac{11T}{16H_0^2}$. However, as we showed in Lemma \ref{lemma quase matador} that $\Sigma(H)\ll H$, in the same line of reasoning we can  show that
$\mathcal{M}(H_k)\ll H_k$, and hence, we again obtain a contradiction. 

\textit{Case 3}: $u(3),u(9)<0$ and $u(2)=0$. In this case $u(4)\geq0$. In this case we define
\begin{equation*}
\mathcal{M}(H):=\sum_{\substack{d=1\\ \gcd(d,3)=1}}^\infty u(d)d\sum_{\rad(R)|q}\mu^2(R)\sum_{g|\rad(q)/2^k}\frac{\mu(g)}{g^2}\norma\frac{Hg}{dR}\norma.
\end{equation*}
Notice that in this case, $u(d)\geq0$ for all $\gcd(d,3)=1$. A simple calculation shows that $u(3)3=-24$ and $u(9)9=-9$. We then have $\Sigma(H)=\mathcal{M}(H)-24\mathcal{M}(H/3)-9\mathcal{M}(H/9)$. In particular for all $H\gg1$:
\begin{equation}\label{equacao M(9H)}
\mathcal{M}(9H)=24\mathcal{M}(3H)+9\mathcal{M}(H)+\Sigma(9H)\geq 24\mathcal{M}(3H)+9\mathcal{M}(H)-T.
\end{equation}

Observe that
\begin{equation*}
\mathcal{M}(H)\geq\sum_{\substack{d=1\\ \gcd(d,3)=1}}^\infty u(d)d\sum_{g|\rad(q)/2^k}\frac{\mu(g)}{g^2}\norma\frac{Hg}{d}\norma.
\end{equation*}
Let $H_0=H_0(M)=\frac{1}{2}\lcm\{d_1,...,d_M\}$, and observe that in the case $u(3)<0$, we necessarily have $\gcd(q,3)=1$, and hence, for $g|\rad(q)/2^\kappa$, $d\leq 2M$ and $\gcd(d,2)=1$, we have $\|3H_0g/d\|,\|H_0g/d\|=\frac{1}{2}$. Thus, again we can make $\mathcal{M}(3H_0),\mathcal{M}(H_0)\gg \sqrt{M}$. We then choose $M$ such that $\mathcal{M}(3H_0),\mathcal{M}(H_0)\geq 10T$. By iterating and doing induction as above, we can conclude that $\mathcal{M}(3^k9H_0)\geq 24^k\cdot 10T$. Define $H_k=3^k9H_0$. We have thus shown that
\begin{equation*}
\mathcal{M}(H_k)\geq \left(\frac{H_k}{9H_0}\right)^{\frac{\log 24}{\log 3}}\cdot10T.
\end{equation*}
Since $\mathcal{M}(H_k)\ll H_k$ and $\frac{\log 24}{\log 3}\geq 2.8$, we again arrive at a contradiction.

\textit{Case 4:} $u(2)<0$ and $u(3)<0$. By Lemma 2.1 of \cite{klurmanteravainen}, we have that for completely multiplicative functions $f,\tilde{f}:\NN\to [-1,1]$ such that $f(p)\neq \tilde{f}(p)$ for only finitely many primes, and for those primes $|\tilde{f}(p)|<1$, then
$$\bigg{|} \sum_{n\leq x}\tilde{f}(n) \bigg{|}\ll\sup_{y\leq x}\bigg{|} \sum_{n\leq y}f(n) \bigg{|}.$$
This is also true if both $f$ and $\tilde{f}$ have support on the squarefree integers. We omit the proof of this since it follows more or less verbatim. Thus, the proof will be completed if $\tilde{f}$ has unbounded partial sums,  where $\tilde{f}(p)$ is equal to $f(p)$ for all primes $p$, except in powers of $3$, where $\tilde{f}(3^k)=0$ for all $k\geq 1$. In this case we can easily compute $u$ related to $\tilde{f}$ and show that  $u(3^k)\geq 0$ for all $k\geq 1$. Thus we can argue exactly as in the case 2 above to conclude that $\tilde{f}$ has unbounded partial sums.
\end{proof}

\subsection{Proof of Theorem \ref{teorema cube free}} We let $f=\mu_2^2g$, where $g:\NN\to\{-1,1\}$ is a completely multiplicative function, and in light of Proposition \ref{proposicao chi pretentious}, we suppose that $\mathbb{D}(f,\chi)<\infty$ for some real and primitive Dirichlet character of conductor $q$. We let $F$ be the multiplicative function given by: For all primes $\gcd(p,q)=1$ and all powers $k$, $F(p^k)=f(p^k)\chi(p^k)$, for each $p|q$ and any power $k$, $F(p^k)=1$. As in the squarefree case, in light of Theorem \ref{teorema correlacoes klurman}, we let
\begin{equation*}
v(d):=\prod_{p\nmid q}M_p(F,\overline{F},d).
\end{equation*}
Then, again, if $\gcd(d,q)=1$, $v(md)=v(d)$, for all $m\in\NN(q)$. For $\gcd(p,q)=1$, we redefine:
\begin{align}\label{equacao definicao h cube free}
h(p)&=1-\frac{2(1-F(p))}{p}+\frac{2(1-F(p))}{p^2}-\frac{2}{p^3},\\
C&=\prod_{p\nmid q}h(p).
\end{align}
As before, we have that $C\neq0$.
\begin{lemma}\label{lemma construcao da u cube free} We have that $v(d)=C(1\ast u)(d)$, where $u$ satisfies the following properties:
\begin{align*}
a)&\sum_{n=1}^\infty |u(n)|<\infty,\\
b)&\sum_{n\leq H}|u(n)|n=o(H), \\
c)&\sum_{\substack {n\leq H}}u(n)n\ind_{\gcd(n,2)=1}\gg H^{1/3}.
\end{align*}
\end{lemma}
\begin{proof}
In what follows $\gcd(p,q)=1$. Then
\begin{equation*}
F(p^k)=\begin{cases}f(p)\chi(p),&\mbox{ if }k=1\\ 1,&\mbox{ if }k=2\\ 0,&\mbox{ if }k\geq 3.\end{cases}
\end{equation*}
As $\mu\ast F(p^k)=F(p^k)-F(p^{k-1})$, we have that
\begin{equation*}
\mu\ast F(p^k)=\begin{cases}F(p)-1,&\mbox{ if }k=1\\ 1-F(p),&\mbox{ if }k=2\\ -1,&\mbox{ if }k= 3\\0,&\mbox{ if }k\geq 4.\end{cases}
\end{equation*}
Suppose that $p^n\| d$. By Lemma \ref{lemma Klurman auxiliar teorema correlations}, we have that
\begin{equation*}
M_p(F,\overline{F},d)=\sum_{a=0}^n\left(\frac{|F\ast\mu(p^a)|^2}{p^a} +2\sum_{i=a+1}^\infty \frac{Re(F\ast\mu(p^a)\overline{F\ast\mu(p^i)} )}{p^{i}} \right).
\end{equation*}
Thus, if $n=0$
\begin{align*}
M_p(F,\overline{F},d)&=\sum_{a=0}^0\left(\frac{|F\ast\mu(p^a)|^2}{p^a} +2\sum_{i=a+1}^\infty \frac{Re(F\ast\mu(p^a)\overline{F\ast\mu(p^i)} )}{p^{i}} \right)\\
&=1+2\left(\frac{F\ast \mu(p)}{p}+\frac{F\ast \mu(p^2)}{p^2}+\frac{F\ast \mu(p^3)}{p^3} \right)\\
&=1+2\left(\frac{F(p)-1}{p}+\frac{1-F(p)}{p^2}-\frac{1}{p^3}\right)\\
&=h(p).
\end{align*}
If $n=1$,
\begin{align*}
M_p(F,\overline{F},d)&=\sum_{a=0}^1\left(\frac{|F\ast\mu(p^a)|^2}{p^a} +2\sum_{i=a+1}^\infty \frac{Re(F\ast\mu(p^a)\overline{F\ast\mu(p^i)} )}{p^{i}} \right)\\
&=h(p)+\frac{|F\ast\mu(p)|^2}{p}+2F\ast\mu(p)\left(\frac{F\ast \mu(p^2)}{p^2}+\frac{F\ast \mu(p^3)}{p^3} \right)\\
&=h(p)+\frac{|F(p)-1|^2}{p}+2(F(p)-1)\left(\frac{1-F(p)}{p^2}-\frac{1}{p^3} \right)\\
&=1-\frac{2(1-F(p))}{p^2}-\frac{2F(p)}{p^3}.
\end{align*}
If $n=2$,
\begin{align*}
M_p(F,\overline{F},d)&=\sum_{a=0}^2\left(\frac{|F\ast\mu(p^a)|^2}{p^a} +2\sum_{i=a+1}^\infty \frac{Re(F\ast\mu(p^a)\overline{F\ast\mu(p^i)} )}{p^{i}} \right)\\
&=1-\frac{2(1-F(p))}{p^2}-\frac{2F(p)}{p^3}+\frac{|F\ast \mu(p^2)|^2}{p^2}+2F\ast\mu(p^2)\frac{F\ast\mu(p^3)}{p^{3}} \\
&=1-\frac{2(1-F(p))}{p^2}-\frac{2F(p)}{p^3}+\frac{|1-F(p)|^2}{p^2}+2(1-F(p))\frac{-1}{p^{3}}\\
&=1-\frac{2}{p^3}.
\end{align*}
If $n=3$,
\begin{align*}
M_p(F,\overline{F},d)&=\sum_{a=0}^3\left(\frac{|F\ast\mu(p^a)|^2}{p^a} +2\sum_{i=a+1}^\infty \frac{Re(F\ast\mu(p^a)\overline{F\ast\mu(p^i)} )}{p^{i}} \right)\\
&=1-\frac{2}{p^3}+\frac{|F\ast\mu(p^3)|^2}{p^3}=1-\frac{1}{p^3}.
\end{align*}
If $n\geq 4$
\begin{align*}
M_p(F,\overline{F},d)&=\sum_{a=0}^3\left(\frac{|F\ast\mu(p^a)|^2}{p^a} +2\sum_{i=a+1}^\infty \frac{Re(F\ast\mu(p^a)\overline{F\ast\mu(p^i)} )}{p^{i}} \right)\\
&=1-\frac{1}{p^3}.
\end{align*}
As
\begin{align*}
v(d)&=\prod_{p\nmid q}M_p(F,\overline{F},d)=\prod_{\substack{p\nmid d \\ p\nmid q}}M_p(F,\overline{F},d)\prod_{\substack{p^n\|d \\ (n\geq 1)}}M_p(F,\overline{F},d) \\
&=\prod_{\substack{p\nmid q}}h(p)\prod_{\substack{p^n\|d\\(n\geq 1)}}\frac{1}{h(p)}M_p(F,\overline{F},d),
\end{align*}
we obtain that $v(d)=Cg(d)$, where $g$ is the multiplicative function given by $g(p^n)=1$ if $p|q$, and if $\gcd(p,q)=1$:
\begin{equation*}
g(p^n)=\begin{cases}\frac{1}{h(p)}\left(1-\frac{2(1-F(p))}{p^2}-\frac{2F(p)}{p^3} \right),&\mbox{ if }n=1,\\ \frac{1}{h(p)}\left(1-\frac{2}{p^3} \right), &\mbox{ if }n=2\\
\frac{1}{h(p)}\left(1-\frac{1}{p^3}\right), &\mbox{ if }n\geq 3.\end{cases}
\end{equation*}
Let $u=\mu\ast g$. Then $u(p^k)=g(p^k)-g(p^{k-1})$. Thus, clearly $u(p^k)=0$ if $k\geq 1$ and $p|q$. Now, if $\gcd(p,q)=1$, we have that $u(p^k)=0$ for $k\geq 4$, and for $k\leq 3$:
\begin{align*}
u(p)&=g(p)-1=\frac{1}{h(p)}\left(1-\frac{2(1-F(p))}{p^2}-\frac{2F(p)}{p^3} -h(p)\right)\\
&=\frac{1}{h(p)}\left(1-\frac{2(1-F(p))}{p^2}-\frac{2F(p)}{p^3}-\left(1-\frac{2(1-F(p))}{p}+\frac{2(1-F(p))}{p^2}-\frac{2}{p^3}\right)\right)\\
&=\frac{1}{h(p)}\left(\frac{2(1-F(p)}{p}-\frac{4(1-F(p))}{p^2}+\frac{2(1-F(p))}{p^3}\right) \\
&=\frac{2(1-F(p)}{h(p)}\left(\frac{1}{p}-\frac{2}{p^2}+\frac{1}{p^3}\right).
\end{align*}
\begin{align*}
u(p^2)&=\frac{1}{h(p)}\left(1-\frac{2}{p^3}-\left(1-\frac{2(1-F(p))}{p^2}-\frac{2F(p)}{p^3} \right)\right)\\
&=\frac{2(1-F(p))}{h(p)p^2}\left(1-\frac{1}{p}\right).
\end{align*}
\begin{align*}
u(p^3)&=\frac{1}{h(p)}\left(1-\frac{1}{p^3}-\left(1-\frac{2}{p^3} \right)\right)\\
&=\frac{1}{h(p)p^3}.
\end{align*}
Thus, we have that the sign of $u(p^l)$ is dictated by $h(p)$. Since
\begin{align*}
h(p)&=1-\frac{2(1-F(p))}{p}+\frac{2(1-F(p))}{p^2}-\frac{2}{p^3},\\
\end{align*}
we have that $0<h(p)<1$ in the case that $F(p)=1$. In the case $F(p)=-1$, we have that
\begin{align*}
h(p)&=1-\frac{4}{p}+\frac{4}{p^2}-\frac{2}{p^3}=\frac{p^3-4p^2+4p-2}{p^3}\leq \frac{p^3-2}{p^3}<1.\\
\end{align*}
In this case, we have that $h(2)=-1/4$. Let $s(p)=p^3-4p^2+4p-2$. Then $s'(p)=3p^2-8p+4=3(p-2)(p-2/3)$, and hence $s(p)$ is increasing for $p\geq 3$. Hence, for $p\geq 3$, $s(p)\geq s(3)=1>0$. In particular, in any case we have $h(p)>0$ and $u(p^3)p^3\geq 1$ for all $p\geq 3$. Similarly to Lemma \ref{lemma construcao da u}, we can show that $\sum_{n=1}^\infty |u(n)|<\infty$ and that $\sum_{n\leq H}|u(n)|n=o(H)$. Let $\tilde{u}$ be the multiplicative function such that $\tilde{u}(p^k)=\ind_{k=3}\ind_{\gcd(p,2q)=1}$.
Thus $\tilde{U}(s):=\sum_{n=1}^\infty\frac{\tilde{u}(n)}{n^s}=\prod_{p\nmid 2q}(1+p^{-3s})$. Thus the Dirichlet series $\tilde{U}(s)\zeta^{-1}(3s)=\prod_{p|2q}(1-p^{-3s})\prod_{p\nmid 2q}(1-p^{6s})$ converges absolutely for all $Re(s)>1/6$. Hence $\sum_{n\leq H}\tilde{u}(n)\gg H^{1/3}$.  Now observe that
\begin{equation*}
\sum_{\substack {n\leq H}}u(n)n\ind_{\gcd(n,2)=1}\geq\sum_{n\leq H}\tilde{u}(n)\gg H^{1/3}.
\end{equation*}
\end{proof}
\begin{proof}[Proof of Theorem \ref{teorema cube free}] Let $f=\mu_2^2g$, where $g:\NN\to\{-1,1\}$ is a completely multiplicative function. Similarly as in the squarefree case, we can show that if $f$  has bounded partial sums, then
\begin{equation}\label{equacao Sigma cube free}
\Sigma(H):=\sum_{d=1}^\infty u(d)d\sum_{\rad(R)|q}\mu_2^2(R)\sum_{g|\rad(q)/2^\kappa}\frac{\mu(g)}{g^2}\norma\frac{Hg}{dR}\norma\ll 1,
\end{equation}
where $\kappa=1$ if $q$ is even, or $0$ otherwise. Recall that the inner sum $\sum_{g|\rad(q)/2^\kappa}$ is non-negative. By Lemma \ref{lemma construcao da u cube free} the function $u$ can be negative only at powers of 2, and either $u(2^l)\geq 0$ for $l=1,2,3$, or $u(2^l)<0$ for $l=1,2,3$. Consider first the case that all $u(2^l)\geq0$.
In this case
\begin{equation*}
\Sigma(H)\geq\sum_{d=1}^\infty u(d)d\sum_{g|\rad(q)/2^\kappa}\frac{\mu(g)}{g^2}\norma\frac{Hg}{d}\norma.
\end{equation*}
Let $d_1,...,d_M$ be the first $M$ odd integers, and let $H=\frac{1}{2} \lcm(d_1,...,d_M)$. Thus for each $d_i$, we have that $\|Hg/d\|=1/2$. Hence, by Lemma \ref{lemma construcao da u cube free},
\begin{equation*}
\Sigma(H)\gg\sum_{\substack{d\leq 2M\\ \gcd(d,2)=1}}u(d)d\gg M^{1/3},
\end{equation*}
which is a contradiction for sufficiently large $M$. Suppose now that $u(2)<0$. A simple calculation shows that $u(2)2=-4$, $u(4)4=-8$ and $u(8)8=-4$. Let
\begin{equation*}
\MM(H)=\sum_{\substack{d=1\\ \gcd(d,2)=1}}^\infty u(d)d\sum_{\rad(R)|q}\mu_2^2(R)\sum_{g|\rad(q)/2^\kappa}\frac{\mu(g)}{g^2}\norma\frac{Hg}{dR}\norma.
\end{equation*}
Then $\Sigma(H)=\MM(H)-4\MM(H/2)-8\MM(H/4)-4\MM(H/8)$.
Let $T=\sup_H|\Sigma(H)|$. Then for all $H\gg1$
\begin{equation}\label{equacao recursao cube free}
\MM(4H)\geq4\MM(2H)+8\MM(H)+4\MM(H/2)-T\geq 4\MM(2H)+8\MM(H)-T.
\end{equation}
let $H_0=H_0(M)=\frac{1}{4} \lcm(d_1,...,d_M)$. Since $g|\rad(q)/2^\kappa$, $\|2H_0g/d_i\|=1/2$. On the other hand write $\lcm(d_1,...,d_M)g/d_i=2l+1$. Then $\|H_0g/d_i\|=\|(2l+1)/4\|=1/4$ if $l$ is even, otherwise, if $l=2l'+1$, $\|(2l+1)/4\|=\|(2l'+1)/2+1/4\|=\|3/4\|=1/4$. Thus,
\begin{equation*}
\MM(2H_0)\geq \sum_{\substack{d=1\\ \gcd(d,2)=1}}^\infty u(d)d\sum_{g|\rad(q)/2^\kappa}\frac{\mu(g)}{g^2}\norma\frac{2H_0g}{d}\norma\gg \sum_{\substack{d\leq 2M\\ \gcd(d,2)=1}}^\infty u(d)d\gg M^{1/3}.
\end{equation*}
Hence, by making $M\gg T^3$, we can make $\MM(2H_0),\MM(H_0)\geq 10 T$. Iterating and using induction in (\ref{equacao recursao cube free}), we conclude that $\mathcal{M}(2^kH_0)\geq 4^k2T$. Defining $H_k=2^kH_0$, we obtain that $\MM(H_k)\gg H_k^2$. However, as in the proof of Theorem \ref{teorema correlacoes resembling mobius}, we have that $\MM(H_k)\ll H_k$, and hence we again obtain a contradiction.
\end{proof}

\subsection{Proof of Theorem \ref{teorema H^1/4 cancelacao}}
\begin{proof} Consider first the squarefree case. Since $f_1(p)=\chi(p)$ for all $\gcd(p,q)=1$, we have, by Lemma \ref{lemma construcao da u}, that $u(d)d$ is always non-negative, and moreover, it is supported on the perfect squares coprime with $q$, and $u(p^2)p^2=\frac{1}{h(p)}$, where $h(p)=1-\frac{2}{p^2}$.
Consider, then, the Dirichlet series $U(s)=\sum_{d=1}^\infty \frac{u(d)d}{d^s}$. Let $F(s)=\sum_{n=1}^\infty \frac{a(n)}{n^s}:= U(s)\zeta(2s)^{-1}$. Thus
\begin{equation*}
F(s)=\prod_{p|q}\left(1-\frac{1}{p^{2s}}\right)\prod_{p\nmid q}\left( 1+\frac{2}{p^2 h(p)p^{2s}}-\frac{1}{h(p)p^{4s}}\right).
\end{equation*}
Clearly, from the Euler product above, we have that $F(s)$ converges absolutely for all $Re(s)>1/4$, and hence $\sum_{n\leq x}a(n)\ll x^{1/4+\epsilon}$, for any $\epsilon>0$. Now define $\zeta(2s):=\sum_{n=1}^\infty \frac{\ind^{(2)}(n)}{n^s}$. Then $\sum_{n\leq x}\ind^{(2)}(n)=\sum_{n\leq \sqrt{x}}1=[\sqrt{x}]$.
Thus, since $u(d)d=\ind^{(2)}\ast a(d)$, we have
\begin{align*}
\sum_{d\leq H}u(d)d=\sum_{d\leq H}a(d)\bigg{[}\sqrt{\frac{H}{d}}\bigg{]}=\sqrt{H}\sum_{d=1}^\infty \frac{a(d)}{\sqrt{d}}+O(H^{1/4+\epsilon}):=D\sqrt{H}+O(H^{1/4+\epsilon}).
\end{align*}
On the other hand, we have that
\begin{align*}
H\sum_{d>H}u(d)&=H\int_{H^+}^\infty\frac{1}{x}d\left(\sum_{n\leq x}u(n)n\right)=-\sum_{d\leq H}u(d)d+H\int_{H}^\infty \left(\sum_{n\leq x}u(n)n\right)\frac{dx}{x^2}\\
&=-D\sqrt{H}+2D\sqrt{H} +O(H^{1/4+\epsilon})=D\sqrt{H}+O(H^{1/4+\epsilon}).
\end{align*}
Now recall formula (\ref{equacao Lambda(H)}):
\begin{equation*}
\Lambda(H)=C\sum_{d=1}^\infty u(d)d\sum_{\rad(R)|q}\mu^2(R)\sum_{g|\rad(q)}\frac{\mu(g)}{g^2}\Delta\left(\frac{Hg}{qdR}\right),
\end{equation*}
where $\Delta(t)=\{t\}-\{t\}^2\leq \{t\}$. As $\sum_{\rad(R)|q}\mu^2(R)<\infty$, we split the sum $\sum_{d=1}^\infty$ into $\sum_{d\leq \alpha H}$ and $\sum_{d> \alpha H}$, where $\alpha$ is such that: If $d>\alpha H$, then $\frac{Hg}{qdR}<1$, for all $g|\rad(q)$. Thus we obtain that
\begin{equation*}
\Lambda(H)\ll \sum_{d\leq \alpha H}u(d)d+H\sum_{d>\alpha H}u(d)\ll \sqrt{H}.
\end{equation*}
Now we consider the cubefree case. Since $f_2(p)=\chi(p)$ for all $\gcd(p,q)=1$, by Lemma \ref{lemma construcao da u cube free}, we have that $u$ has support on the perfect cubes coprime with $q$, and $u(p^3)p^3=\frac{1}{h(p)}$, where $h(p)=1-\frac{2}{p^3}$. Thus we can proceed  in the same way as above. \end{proof}

\appendix
\section{}
\subsection{Proposition \ref{proposicao chi pretentious}}\label{appendix proposicao chi pretentious}
\begin{proof} Let $A(x)=\sum_{n\leq x}f^2(n)$. Let $y$ be large such that $A(t)\geq ct/2$ for all $t\geq y$. Then, for $x\geq y^{4/c}$:
\begin{align*}
\sum_{n\leq x}\frac{f^2(n)}{n}&=\int_{1^-}^xt^{-1}dA(t)=x^{-1}A(x)+\int_{1}^xt^{-2}A(t)dt\\
&\geq \int_{y}^\infty t^{-2}A(t)dt\geq \frac{c}{2}(\log x-\log y)\\
&\geq\frac{c}{4}\log x.
\end{align*}
Consider
\begin{equation*}
\Lambda(H)=\frac{1}{\log x}\sum_{n\leq x}\frac{1}{n}\left(\sum_{k=n+1}^{n+H}f(k)\right)^2.
\end{equation*}
Expanding the square above, we have that the diagonal contribution is
\begin{align*}
\frac{1}{\log x}\sum_{n\leq x}\frac{1}{n}\sum_{k=n+1}^{n+H}f^2(k)&=\frac{1}{\log x}\sum_{n\leq x}\frac{1}{n}\sum_{k=1}^{H}f^2(n+k)=\frac{1}{\log x}\sum_{k=1}^{H}\sum_{n\leq x}\frac{f^2(n+k)}{n}\\
&=\frac{1}{\log x}\sum_{k=1}^{H}\sum_{n\leq x}\frac{f^2(n+k)}{n+k}+O\left(\frac{H^2}{\log x}\right)\\
&=\frac{H}{\log x}\sum_{1\leq n\leq x}\frac{f^2(n)}{n}+O\left(\frac{H^2}{\log x}\right)\\
&\geq \frac{cH}{4}+O\left(\frac{H^2}{\log x}\right),
\end{align*}
provided that $x$ is large enough. Suppose now that the partial sums of $f$ are bounded by $C$. Hence, by the triangle inequality, $\Lambda(H)\leq 8 C^2$. Let
\begin{equation*}
S_h=S_h(x):=\frac{1}{\log x}\sum_{n\leq x}\frac{f(n)f(n+h)}{n}.
\end{equation*}
Suppose, by contradiction, that for fixed $H$, there exists arbitrarily large values of $x>0$, such that for each $h\leq H$, $|S_h|\leq \frac{c}{20H}$. Thus, for these values of $x$, the non-diagonal terms in $\Lambda(H)$ are
\begin{align*}
&\frac{2}{\log x}\sum_{n\leq x}\frac{1}{n}\sum_{k=n+1}^{n+H}\sum_{l=k+1}^{n+H}f(k)f(l)=\frac{2}{\log x}\sum_{n\leq x}\frac{1}{n}\sum_{k=n+1}^{n+H}\sum_{h=1}^{n+H-k}f(k)f(k+h)\\
&=\frac{2}{\log x}\sum_{n\leq x}\frac{1}{n}\sum_{k=1}^{H}\sum_{h=1}^{H-k}f(n+k)f(n+k+h)\\
&=\frac{2}{\log x}\sum_{h=1}^H\sum_{k=1}^{H-h}\sum_{n\leq x}\frac{f(n+k)f(n+k+h)}{n}\\
&=\frac{2}{\log x}\sum_{h=1}^{H-1}\sum_{k=1}^{H-h}\sum_{n\leq x}\frac{f(n+k)f(n+k+h)}{n+k}+O\left(\frac{H^2\log H}{\log x}\right)\\
&=\frac{2}{\log x}\sum_{h=1}^{H-1}\sum_{k=1}^{H-h}\sum_{n\leq x}\frac{f(n)f(n+h)}{n}+O\left(\frac{H^2\log H}{\log x}+\frac{H^3}{x\log x}\right)\\
&=2\sum_{h=1}^{H}(H-h)S_h+O\left(\frac{H^2\log H}{\log x}+\frac{H^3}{x\log x}\right)\\
&\geq -2\frac{c}{20H}\sum_{h=1}^{H-1}h+O\left(\frac{H^2\log H}{\log x}+\frac{H^3}{x\log x}\right)\\
&\geq -\frac{cH}{20}+O\left(\frac{H^2\log H}{\log x}+\frac{H^3}{x\log x}\right).
\end{align*}
Thus,
\begin{equation*}
\Lambda(H)\geq \frac{cH}{5}+O\left(\frac{H^2\log H}{\log x}+\frac{H^3}{x\log x}\right),
\end{equation*}
and selecting $H\geq500 C^2/c$, we obtain a contradiction for sufficiently large $x$. Thus, for fixed $H\geq 500 C^2/c$, for all $x>0$ sufficiently large, there exists $h_x\leq H$ such that $|S_{h_x}|\geq \frac{c}{20H}$. Thus, for a sequence $x_n\to\infty$, there exists a fixed $h\leq H$, such that $S_h(x_n)\geq \frac{c}{20H}$. Now, as in Lemma 4.3 of \cite{klurmancorrelation}, or Proposition 1.11 of \cite{taodiscrepancy}, we conclude that there exists $A=A(H)\geq 0$ such that for any sufficiently large $x$, there exists $t_x\leq Ax$ and a primitive character $\chi$ of modulus $D\leq A$ such that
$\mathbb{D}(f(n),\chi(n)n^{it_x};x)\leq A$. Now, as in Lemma 2.5 of \cite{klurmanchudakov}, we conclude that there is a primitive character $\chi$ of modulus $q$ and a real number $t$ such that $\mathbb{D}(f(n),\chi(n)n^{it})<\infty$. Since for each prime $p$, $f(p)^2=1$, we can argue as in the same line of reasoning of Lemma 5.1 of \cite{klurmanchudakov} to conclude that $\mathbb{D}(f(n),\chi(n))<\infty$. Finally, $\chi$ must be real. To see that this is true, suppose that $\chi$ is not real. Then we can select $1\leq k \leq q$, $\gcd(k,q)=1$, and $|Re(\chi(k))|=|\cos(\theta)|<1$. For each prime $\gcd(p,q)=1$, let $\chi(p)=e^{i\theta p}$. Thus, $1-Re(f(p)\overline{\chi(p)})=1-f(p)\cos(\theta_p)\geq 1-|\cos(\theta_p)|$.
Thus, by standard estimates for the number of primes in arithmetic progressions, we obtain a contradiction: $\mathbb{D}(f,\chi)^2\geq \sum_{p\in\mathcal{P}}\frac{1-|\cos(\theta_p)|}{p}\geq (1-|\cos(\theta)|)\sum_{p\equiv k\mod q}\frac{1}{p}=\infty$.  \end{proof}
\subsection{Lemma \ref{lemma formula klurman}}\label{appendix formula klurman}
\begin{proof}
Let $d=md'$, where $\gcd(d',q)=1$ and $m\in\NN(q)$. Thus, for any function $\lambda:\NN\to\CC$, to sum $\lambda(R)$ over $R|d$ such that  $\rad(R)|q$ is equal to the sum of $\lambda(R)$ over those $R|m$. If $R|m$, we have that $\frac{d}{R}=\frac{m}{R}d'$, and $\frac{m}{R}\in\NN(q)$. Notice that $u(e)=0$ whenever $\gcd(e,q)>1$. Hence, $\sum_{e|d/R}u(e)=\sum_{e|d'm/R}u(e)=\sum_{e|d'}u(e)=(1\ast u)(d')=\frac{1}{C}v(d')$. Next, we recall that, by Lemma \ref{lemma construcao da v}, $v(d')=v(md')=v(d)$. Thus,
\begin{equation*}
\frac{C}{q}\sum_{R|m}\frac{|f(R)|^2}{R}\sum_{a=1}^q \chi(a)\overline{\chi(a+d/R)}\sum_{e|d/R}u(e)=\frac{v(d)}{q}\sum_{R|m}\frac{|f(R)|^2}{R}\sum_{a=1}^q \chi(a)\overline{\chi(a+d/R)}.
\end{equation*}
In \cite{klurmanchudakov}, pg. 685, $\sum_{a=1}^q \chi(a)\overline{\chi(a+md'/R)}$ is calculated with great detail:
\begin{equation*}
\sum_{a=1}^q \chi(a)\overline{\chi(a+md'/R)}=q\prod_{p^l\|q}\left(\ind_{p^l|m/R}-\frac{1}{p}\ind_{p^{l-1}|m/R}\right).
\end{equation*}
Thus, we wish to prove that
\begin{equation*}
\sum_{  R|m }\frac{|f(R)|^2}{R}\prod_{p^l\|q}\left(\ind_{p^l|m/R}-\frac{1}{p}\ind_{p^{l-1}|m/R}\right)=\prod_{p^l\| q}M_{p^l}(f,\overline{f},m),
\end{equation*}
where $M_{p^l}(f,\overline{f},m)$ is given by Theorem \ref{teorema correlacoes klurman}. We are going to do that by induction on the number $\omega(q)$. Suppose, then, that $\omega(q)=1$, \textit{i.e}, $q=p^l$ for some prime $p$ and some power $l\geq1$. We have, then, that $m=p^\alpha$ for some power $\alpha\geq 0$. Thus, if $\alpha\leq l-2$:
\begin{align*}
\sum_{  R|p^\alpha }\frac{|f(R)|^2}{R}\left(\ind_{p^l|p^\alpha/R}-\frac{1}{p}\ind_{p^{l-1}|p^\alpha/R}\right)=0.
\end{align*}
Now, if $\alpha=l-1$:
\begin{align*}
\sum_{  R|p^\alpha }\frac{|f(R)|^2}{R}\left(\ind_{p^l|p^\alpha/R}-\frac{1}{p}\ind_{p^{l-1}|p^\alpha/R}\right)=-\frac{1}{p}.
\end{align*}
Now, if $\alpha=l+k$, $k\geq 0$:
\begin{align*}
\sum_{R|p^{l+k} }\frac{|f(R)|^2}{R}\left(\ind_{R|p^k}-\frac{1}{p}\ind_{R|p^{k+1}}\right)=\left(1-\frac{1}{p}\right)\sum_{j=0}^k\frac{|f(p^j)|^2}{p^j}-\frac{|f(p^{k+1})|^2}{p^{k+1}}.
\end{align*}
Thus we have proved the statement in the case $\omega(q)=1$. Suppose then that the statement is true for all $q$ with $\omega(q)\leq n$. Let $q=q'\tilde{p}^{\tilde{l}}$, where $\gcd(\tilde{p},q')=1$ and $\omega(q')=n$. We have, then, that $m=m'\tilde{p}^\alpha$, where $m'\in\NN(q')$ and $\alpha\geq0$. Hence,
\begin{equation*}
\sum_{R|m}=\sum_{\beta=0}^\alpha\sum_{\substack{\tilde{p}^\beta R|m\\ \gcd(R,\tilde{p})=1}}=\sum_{\beta=0}^\alpha\sum_{\substack{\tilde{p}^\beta R|m\\ R|m'}}.
\end{equation*}
Hence
\begin{align*}
&\sum_{  R|m }\frac{|f(R)|^2}{R}\prod_{p^l\|q}\left(\ind_{p^l|m/R}-\frac{1}{p}\ind_{p^{l-1}|m/R}\right)\\
=&\sum_{\beta=0}^\alpha \sum_{\substack{\tilde{p}^\beta R|m'\tilde{p}^\alpha\\ R|m'}}\frac{|f(\tilde{p}^\beta R)|^2}{\tilde{p}^\beta R}\prod_{p^l\|q'}\left(\ind_{p^l|m'\tilde{p}^\alpha/\tilde{p}^\beta R}-\frac{1}{p}\ind_{p^{l-1}|m'\tilde{p}^\alpha/\tilde{p}^\beta R}\right)\left(\ind_{\tilde{p}^{\tilde{l}}|m'\tilde{p}^\alpha/\tilde{p}^\beta R}-\frac{1}{p}\ind_{\tilde{p}^{\tilde{l}-1}|m'\tilde{p}^\alpha/\tilde{p}^\beta R}\right)\\
=&\sum_{\beta=0}^\alpha \sum_{R|m'}\frac{|f(\tilde{p}^\beta R)|^2}{\tilde{p}^\beta R}\prod_{p^l\|q'}\left(\ind_{p^l|m'/R}-\frac{1}{p}\ind_{p^{l-1}|m'/R}\right)\left(\ind_{\tilde{p}^{\tilde{l}}|\tilde{p}^\alpha/\tilde{p}^\beta }-\frac{1}{p}\ind_{\tilde{p}^{\tilde{l}-1}|\tilde{p}^\alpha/\tilde{p}^\beta }\right)\\
=&\sum_{\beta=0}^\alpha \frac{|f(\tilde{p}^\beta)|^2}{\tilde{p}^\beta} \left(\ind_{\tilde{p}^{\tilde{l}}|\tilde{p}^\alpha/\tilde{p}^\beta }-\frac{1}{p}\ind_{\tilde{p}^{\tilde{l}-1}|\tilde{p}^\alpha/\tilde{p}^\beta }\right)\sum_{R|m'}\frac{|f(R)|^2}{R}\prod_{p^l\|q'}\left(\ind_{p^l|m'/R}-\frac{1}{p}\ind_{p^{l-1}|m'/R}\right)\\
=&\sum_{\beta=0}^\alpha \frac{|f(\tilde{p}^\beta)|^2}{\tilde{p}^\beta} \left(\ind_{\tilde{p}^{\tilde{l}}|\tilde{p}^\alpha/\tilde{p}^\beta }-\frac{1}{p}\ind_{\tilde{p}^{\tilde{l}-1}|\tilde{p}^\alpha/\tilde{p}^\beta }\right)\prod_{p^l\|q'}M_{p^l}(f,\overline{f},m')\\
=&M_{\tilde{p}^{\tilde{l}}}(f,\overline{f},\tilde{p}^\alpha)\prod_{p^l\|q'}M_{p^l}(f,\overline{f},m'),
\end{align*}
where in the penultimate equality we used the induction hypothesis in the case $\omega(q')=n$, and in the last equality we used the induction in the case that $\omega(\tilde{p}^l)=1$. Finally, we notice that,
for each $p|q'$, $M_{p^l}(f,\overline{f},m')=M_{p^l}(f,\overline{f},m'\tilde{p}^\alpha)=M_{p^l}(f,\overline{f},m)=M_{p^l}(f,\overline{f},md')=M_{p^l}(f,\overline{f},d)$. Similarly, $M_{\tilde{p}^{\tilde{l}}}(f,\overline{f},p^\alpha)=M_{\tilde{p}^{\tilde{l}}}(f,\overline{f},d)$.
\end{proof}

{\small{\sc \noindent Marco Aymone \\
Departamento de Matem\'atica, Universidade Federal de Minas Gerais (UFMG), Brazil.}\\
\textit{Email address:} aymone.marco@gmail.com}

\end{document}